\DeclareMathOperator{\supp}{supp}
\theoremstyle{plain}
  \newtheorem{theorem}{\bf Theorem}[section]
  \newtheorem{proposition}[theorem]{\bf Proposition}
  \newtheorem{lemma}[theorem]{\bf Lemma}
  \newtheorem{corolario}[theorem]{\bf Corollary}
\theoremstyle{remark}
  \newtheorem{remark}[theorem]{\bf Remark}
\definecolor{darkgreen}{rgb}{0,.5,0}
\definecolor{verde}{rgb}{0.55,0.68,0.09}
\definecolor{pistacho}{rgb}{0.80,0.89,0.17}
\definecolor{verdeclaro}{rgb}{0.9,0.9,0.25}
\definecolor{marron}{rgb}{0.66,0.47,0.13}
\definecolor{rojo}{rgb}{0.68,0.13,0.04}
\definecolor{azul}{rgb}{0.38,0.53,0.72}
\definecolor{amarillo}{rgb}{0.85,0.81,0.00}
\definecolor{marronclaro}{rgb}{.749,.675,.376}
\definecolor{palegray}{rgb}{0.85,0.85,0.85}
\definecolor{gray}{rgb}{0.63,0.63,0.63}
\definecolor{darkgray}{rgb}{0.50,0.50,0.50}
\begin{document}
	\begin{frontmatter}
	\title{A new convergent algorithm to approximate potentials from fixed angle scattering data}

\author[mymainaddress]{Juan A. Barcel\'o}
\ead{juanantonio.barcelo@upm.es}
\author[mymainaddress]{Carlos Castro}
\ead{carlos.castro@upm.es}
\author[mysecondaryaddress]{Teresa Luque\corref{mycorrespondingauthor}}
\cortext[mycorrespondingauthor]{Corresponding author.}
\ead{t.luque@ucm.es}
\author[mymainaddress]{Mari Cruz Vilela}
\ead{maricruz.vilela@upm.es}

\address[mymainaddress]{Departamento de Matem\'atica e Inform\'atica aplicadas a las Ingenier\'ias Civil y Naval, Universidad Polit\'ecnica de Madrid}
\address[mysecondaryaddress]{Departamento de An\'alisis Matem\'atico y Matem\'atica aplicada, Facultad de Matem\'aticas, Universidad Complutense de Madrid. }
	\begin{abstract}
		We introduce a new iterative method to recover a real compact supported potential of the Schr\"o\-din\-ger operator from their fixed angle scattering data. The method combines a fixed point argument with a suitable approximation of the resolvent of the Schr\"o\-din\-ger operator by partial sums associated to its Born series. Convergence is established for potentials with small norm in certain Sobolev spaces. As an application we show some numerical experiments that illustrate this convergence. 
	\end{abstract}
	
	\begin{keyword}
		 inverse problem\sep Helmholtz equation\sep scattering \sep 
		\MSC[2010] 35P25\sep  35R30 \sep 35J05
	\end{keyword}
	
\end{frontmatter}

\linenumbers


\section{Introduction and statement of results}
We consider the scattering problem for the Schr\"{o}dinger operator $-\Delta+q$ in $\mathbb{R}^d$, $d\ge 2$, where $q$ is a real valued potential with compact support in $B(0,R)$. Here $B(0,R)$ denotes the ball centred at the origin with radius $R>0.$ 

Associated to a given wave number $k>0$ and an incident direction $\theta\in S^{d-1}$ we consider the incident wave $u_i(x)=e^{ik\theta \cdot x}$.
Here $S^{d-1}$ denotes the unit sphere in $\mathbb{R}^d.$
The outgoing scattering solution $u=u(x, \theta, k)$ with wave number $k$ and direction of propagation $\theta,$ is the solution of equation
\begin{equation}\label{ecuacion}
\left(\Delta  + k^2\right) u(x)=q(x)u(x), \qquad x\in\mathbb{R}^d,
\end{equation}
which can be written as
$u=u_i+u_s$ with $u_s(x,\theta,k)$ satisfying the outgoing Sommerfeld radiation condition given by
\begin{equation}\label{Sommerfeld}
\partial_ru_s-iku_s= o\left(r^{-(d-1)/2}  \right ), \quad r=|x| \longrightarrow \infty.
\end{equation}

The function $u_s(x,\theta,k)$ named the scattered wave, is the perturbation of $u$ due to the potential. It is well known that for appropriate $q$, $u_s$ satisfies the following asymptotic expression as $|x|\rightarrow \infty$, 
\begin{equation}\label{asintotica}
u_s(x, \theta, k)=c_dk^{(d-3)/2}e^{ik |x|}|x|^{-(d-1)/2}u_\infty (\theta', \theta,k)+o\left( |x|^{-(d-1)/2}  \right),
\end{equation}
where $\theta'= x/|x|$ is the reflecting angle and 
\begin{equation}\label{amplitud}
u_\infty (\theta', \theta,k)=\int_{\mathbb{R}^d}e^{-ik \theta' \cdot y}q(y) u(y, \theta,k)dy.
\end{equation}
The function $u_{\infty}$ is called the \emph{scattering amplitude}  or \emph{far-field pattern}, and it represents the measurements in the inverse scattering problem. 
For a successful description of direct and inverse scattering problems we refer the reader to Chapter 5 in \cite{Es}.
We are interested in recovering the potential $q(x)$ from the knowledge of the scattering amplitudes  $u_\infty(\theta',\theta_0,k),$ for fixed incident direction $\theta_0$ and $(\theta', k) \in S^{d-1} \times (0, \infty)$, with $\theta'=x/|x|$. In fact, by fixed incident direction we mean data for both $\theta=\theta_0$ and $\theta=-\theta_0,$ since we are considering wave numbers $k>0.$ This problem is known as the fixed angle inverse scattering problem and appears naturally in quantum physics. In general, the recovery of information about $q$ from scattering amplitudes is known as inverse scattering problem and it has been studied by several authors. Here we mention some of them that we consider relevant.

Early works on the study of inverse scattering problems can be found in the middle of the last century with the results of Gelfand and Levitan \cite{GelfandLevitan}, Jost and Kohn \cite{JostKohn}, and Moses \cite{Moses}.
Later on, Prosser generalized the method in \cite{JostKohn} to recover the potential $q,$ based on the Born series (nonlinear approximations), when the Friederichs norm of $q$ is small enough (see \cite{P1,P2,P3,P4}). However, the procedures employed in these papers are purely formal, and the smallness condition is difficult to characterize. 

In the nineties, Eskin and Ralston (see \cite{EskinRalston,EskinRalston2,EskinRalston3}), and also Stefanov (see \cite{St}) studied the problem of uniqueness.
At this time, P\"aiv\"arinta, Somersalo and Serov introduced new techniques for dealing with the problem of singularities using all the scattering data
(see \cite{PaivarintaSomersalo,PaivarintaSerov,PaivarintaSerovSomersalo}).
This problem was also studied by Greenleaf and Uhlmann (see \cite{GreenleafUhlmann}).

In 2001, Ruiz used very precise estimates for the resolvent of the Laplacian to prove
that for non-smooth potential the main singularities of the potential (in the scale of Sobolev spaces) are contained  in the fixed angle Born approximation, which is a linear approximation of the potential that we define below (see  \cite{Ru1}).

More recently, Kilgore, Moskow and Schotland studied the convergence and stability of the Born series and its inverse for several inverse scattering problems (see \cite{MS2008,KMS2012,KMS2017}).
They also made numerical studies in \cite{MS2009}. From the numerical point of view,
Barcel\'o, Castro and Reyes studied the recovery of a potential from scattering data 
using a fixed point algorithm which is not justified from the theoretical point of view (see \cite{BCR}).  

All these results are for real potentials, the more general case of complex potentials was treated in \cite{Mochizuki} by Mochizuki and in \cite{BFRV10} by Barcel\'o, Faraco, Ruiz and Vargas.

The aim of this paper is to construct an iterative method for recovering a potential $q$ from fixed angle scattering data.
More precisely, we obtain a new convergent algorithm that combines two approaches, the nonlinear approximation described by Prosser and the fixed point algorithm of Barcel\'o, Castro and Reyes. It is worth mentioning that, even if the numerical approximations based on these approaches work fine, there is no rigorous proofs in the literature supporting any of them. The interest of the new algorithm is that, on one hand, it is computationally faster than the one described in \cite{BCR}, as we show below, and on the other hand, we are able to prove the convergence rigorously. 


To state our main result, we first rewrite the inverse scattering problem in an equivalent integral formulation. Let us define $R_k$ the outgoing resolvent operator of the Laplace operator given, in terms of the Fourier transform, by
\begin{equation}
\label{resolvente}
\widehat{R_k(f)}(\xi)=\frac{\widehat{f}(\xi)}{-|\xi|^2+k^2+i0}.
\end{equation}
Then, $u_s$ is the solution of the so-called Lippmann-Schwinger integral equation
\begin{equation}\label{Lippmann}
u_s(x, \theta_0, k)=R_k(q e^{ik\theta_0\cdot(\cdot)})(x)+R_k( q u_s(\cdot, \theta_0, k))(x), \qquad x\in\mathbb{R}^d.
\end{equation}
Moreover, from \eqref{amplitud} we have that
\begin{equation}\label{amplitud2}
u_\infty (\theta, \theta_0,k)=\int_{\mathbb{R}^d}e^{-ik (\theta-\theta_0) \cdot y}q(y) dy + \int_{\mathbb{R}^d}e^{-ik \theta \cdot y}q(y) u_s(y, \theta_0,k)dy.
\end{equation}
The problem is then to find an approximation of the potential $q$ knowing that it satisfies (\ref{Lippmann}) and (\ref{amplitud2}) parametrized by the scattering data $u_\infty (\theta, \pm \theta_0,k)$ with $(\theta, k) \in S^{d-1} \times {(0, \infty)}$.

If we formally remove the last term in (\ref{amplitud2}), the right hand side can be interpreted as a suitable Fourier transform that can be inverted to obtain the so-called Born approximation. More precisely, given $\theta_0$ fixed, we have, up to a zero measure set, 
\begin{equation}
\label{descomposicion}
\mathbb{R}^d=H_{\theta_0} \cup H_{-\theta_0}=\left\{\xi \in \mathbb{R}^d: \; \xi \cdot \theta_0 <0  \right\}\cup \left\{\xi \in \mathbb{R}^d: \; \xi \cdot \theta_0 >0  \right\}.
\end{equation}
Then, for $\xi \in H_{\pm\theta_0}$, there exists unique $\theta (\xi) \in S^{d-1}$ and $k(\xi)>0$ such that
$$ \xi:= k(\xi)(\theta (\xi)\mp\theta_0)\qquad \text{(see Figura \ref{Ewald})}.$$ 

\begin{figure}[t]
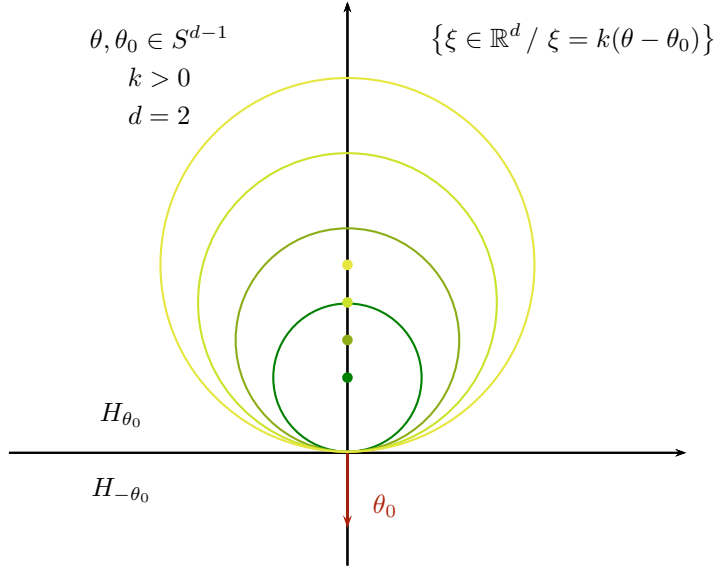

	\rput[t](6.5,0){ \psset{xunit=1cm,yunit=1cm}
		\pspicture[](-4.5,-1.5)(4.5,6)
		\rput(0.5,-0.7){\normalsize{\textcolor[rgb]{0.68,0.13,0.04}{$\theta_0$}}}
		\rput(-2.5,5.5){\normalsize{$\theta,\theta_0\in S^{d-1}$}}
		\rput(-2.5,5){\normalsize{$k>0$}}
		\rput(-2.5,4.5){\normalsize{$d= 2$}}
		\rput(-3,0.5){\normalsize{$\displaystyle H_{\theta_0}$}}
		\rput(-3,-0.5){\normalsize{$\displaystyle H_{-\theta_0}$}}
		\rput(3,5.5){\normalsize{$\displaystyle \left\{\xi\in\mathbb{R}^d\,/\ \xi=k(\theta-\theta_0)\right\}$}}
		\psaxes[linewidth=.7pt,labels=none,ticks=none]{->}(0,0)(-4.5,-1.5)(4.5,6)
		\psline[linecolor=rojo,linewidth=1pt]{->}(0,0)(0,-1)
		\pscircle[linecolor=darkgreen](0,1){1}
		\pscircle[linecolor=darkgreen,fillstyle=solid,fillcolor=darkgreen](0,1){0.07}
		\pscircle[linecolor=verde](0,1.5){1.5}
		\pscircle[linecolor=verde,fillstyle=solid,fillcolor=verde](0,1.5){0.07}
		\pscircle[linecolor=pistacho](0,2){2}
		\pscircle[linecolor=pistacho,fillstyle=solid,fillcolor=pistacho](0,2){0.07}
		\pscircle[linecolor=verdeclaro](0,2.5){2.5}
		\pscircle[linecolor=verdeclaro,fillstyle=solid,fillcolor=verdeclaro](0,2.5){0.07}
		\endpspicture}
	\vspace{7cm}
	\caption{
		Ewald spheres are centered at $-k\theta_0$ with radius $k.$
	}
	\label{Ewald}
\end{figure}

Let us write 
\begin{equation}
\label{theta0}
\theta_0(\xi)=\left\{ \begin{array}{ll}
\theta_0, & \mbox{ if } \xi\in  H_{\theta_0} \\
-\theta_0, & \mbox{ if } \xi\in H_{-\theta_0}
\end{array} \right.
\end{equation}
Then, the Born approximation for fixed angle scattering data $\theta_0$ of a potential $q$ is defined by
\begin{equation}
\label{eq:Born_ap}
\widehat{q_{\theta_0}}(\xi)=u_\infty(\theta(\xi),\theta_0(\xi),k(\xi)). 
\end{equation}
Note that this definition requires scattering data for both $\theta_0$ and $-\theta_0$.

The algorithm proposed in \cite{BCR} used this Born approximation to approximate $u_s(y,\theta_0,k)$ in (\ref{amplitud2}) iteratively. More precisely, a sequence of potentials are defined by $q_1= q_{\theta_0}$ and 
$$
\widehat{q_{n+1}}(\xi)=u_\infty(\theta(\xi),\theta_0(\xi),k(\xi))-\int_{\mathbb{R}^d}e^{-ik(\xi) \theta_0 \cdot y}q_{n}(y) u_s^n(y, \theta_0,k(\xi))dy ,
$$
where $u_s^n$ solves 
\begin{equation} \label{eq_c1}
u_s^n(x, \theta_0, k)=R_k(q_n e^{ik\theta_0\cdot(\cdot)})(x)+R_k( q_n u_s^n(\cdot, \theta_0, k))(x), \qquad x\in\mathbb{R}^d. 
\end{equation}
This requires to solve the Lipmann-Schwinger equation (\ref{Lippmann}) for all $k(\xi)$ at each iteration. As described in \cite{BCR} the numerical version of this algorithm converge in few iterations but each one is expensive computationally, even for 2-d problems. 

To avoid the solution  of equation (\ref{eq_c1}) 
we insert iteratively the Lippmann-Schwinger integral equation (\ref{Lippmann}) into (\ref{amplitud}). In this way we obtain the Born series
\begin{eqnarray}\label{eq:expansion}
u_\infty (\theta, \theta_0,k)&=&\int_{\mathbb{R}^d}e^{-ik (\theta-\theta_0) \cdot y}q(y)dy\nonumber\\
&&+\sum_{j=1}^m\int_{\mathbb{R}^d}e^{-ik \theta \cdot y}(qR_k)^j(q e^{ik\theta_0\cdot(\cdot) })(y)dy\\
&&+\int_{\mathbb{R}^d}e^{-ik \theta \cdot y}(qR_k)^m(q u_s(\cdot,\theta_0,k))(y)dy\nonumber.
\end{eqnarray}
This is the approach followed by R.T. Prosser. At this point, Prosser introduces a classical asymptotic method to recover $q$ based on writing $q$ on power series $q=\sum_{n=0}^\infty \varepsilon^n q_n$, substituting in \eqref{eq_c1} and identifying the terms with the same powers of $\varepsilon.$

Here we follow a different approach.
For convenience, we rewrite \eqref{eq:expansion} as follows
\begin{equation}\label{eq:born_expansion}
\widehat q(\xi)=\widehat{q_{\theta_0}}(\xi)-\sum_{j=1}^m\widehat{ \mathcal{Q}_j(q)}(\xi)-\widehat{ q_m^r}(\xi),\qquad\xi\in\mathbb{R}^d,
\end{equation}
where
\begin{equation}\label{Q_operator}
\widehat{\mathcal{Q}_j(q)}(\xi):=\int_{\mathbb{R}^d}e^{-ik(\xi) \theta(\xi) \cdot y}(qR_{k(\xi)})^j(qe^{ik(\xi)\theta_0\cdot(\cdot) })(y)dy
\end{equation}
and
\begin{equation}\label{resto_m}
\widehat{q_m^r}(\xi):=\int_{\mathbb{R}^d}e^{-ik(\xi) \theta(\xi) \cdot y}(qR_{k(\xi)})^m(qu_s(\cdot,\theta_0,k))(y)dy.
\end{equation}

The convergence of this series suggests that the last term in equation (\ref{eq:born_expansion})  should be small for large $m$. Based on this idea we consider the following family of reduced equations for $q_m$, where we have removed this last term,
\begin{equation}\label{eq:expansion_red}
\widehat {q_m}(\xi)=\widehat{q_{\theta_0}}(\xi)-\sum_{j=1}^m\widehat{ \mathcal{Q}_j(q_m)}(\xi),\qquad\xi=k(\xi)(\theta (\xi)\mp\theta_0),
\end{equation}
with $(\theta, k) \in S^{d-1} \times (0, \infty).$
These reduced equations have the advantage that they do not involve $u_s$, avoiding the solution of the Lipmann-Schwinger equation (\ref{Lippmann}). However, for each parameter $(\theta, k) \in  S^{d-1} \times (0, \infty)$, equation (\ref{eq:expansion_red}) is still nonlinear in $q_m$. Moreover, it is not clear if there exists a unique function $q_m$ satisfying (\ref{eq:expansion_red}).

Here we propose a fixed point procedure to find approximations of $q_m$. More precisely we introduce the linear operator $\mathcal{L}_m$ defined by
$$
\widehat{\mathcal{L}_m (q)} (\xi):= \widehat{q_{\theta_0}}(\xi)- \sum_{j=1}^m \widehat{\mathcal{Q}_j(q)}(\xi), 
$$

Then, if $q_m$ is solution of (\ref{eq:expansion_red}), it must be also a fixed point of $\mathcal{L}_m$ and we can try the usual iterative method based on powers of $\mathcal{L}_m$ to approximate $q_m$. This requires in particular that $\mathcal{L}_m(q)$ is of compact support. Therefore, instead of $\mathcal{L}_m$ we consider the modified operator 
\begin{equation}\label{movil3}
\mathcal{T}_m(q):=  
\phi \mathcal{L}_m(q)
=
\phi q_{\theta_0}
-\phi \sum_{j=1}^m \mathcal{Q}_j(q),
\end{equation}
where $\phi\in\mathcal{C}^\infty$ is a cut-off function with compact support
satisfying
\begin{equation}\label{movil}
\phi(x)=1 , \textrm{ if} \; |x|<R \;\, \textrm{ and } \; \phi (x)=0, \; \textrm{ if } |x|>2R.
\end{equation}

We are now ready to state the main results in this paper.
For each $m\in\mathbb{N}$, we consider the sequence $\{q_{m,\ell}\}_{\ell \in \mathbb{N}}$  defined recursively by
\begin{equation}\label{def:sequence}
\left\{
\begin{array}{ll}
q_{m,1}=0,\\
q_{m,\ell+1}=\mathcal{T}_m(q_{m,\ell}),\qquad \ell\ge 1.
\end{array}
\right.
\end{equation}
We note that $q_{m,2}=\phi q_{\theta_0},$ which is a good approximation to a potential $q$ with support in $B(0,R)$. 

The purpose of this paper is to prove that the sequence of approximations  $\{q_{m,\ell}\}_{m,\ell \in \mathbb{N}}$ converges to the potential $q$ in some sense.
More precisely, we will prove the following theorem.
\begin{theorem}\label{th:cauchy_sequence} For $d\ge 2$ and $\alpha$ satisfying 
	\begin{equation}
	\label{st}
	0<\alpha\le 1,\quad\textrm{ and }\quad\frac{d}{2}-\frac{d}{d-1}<\alpha<\frac{d}{2},
	\end{equation}
	let $q\in W^{\alpha,2}(\mathbb{R}^d)$ be a real valued function with compact support in $B(0,R)$ and such that 
	\begin{equation}
	\label{pequet}
	\|q\|_{W^{\alpha,2}}< A,
	\end{equation} 
	for an appropriate constant $A:=A(d,\alpha,R)>0$ small enough (see Remark \ref{pequenez}).
	For each $m\in\mathbb{N}$, let $\{q_{m,\ell}\}_{\ell\in\mathbb{N}}$ be the sequence defined by \eqref{def:sequence}. 
	Then, there exists $q_m\in W^{\alpha,2}(\mathbb{R}^d)$ satisfying
	$$
	q_m=\lim_{\ell\rightarrow\infty}q_{m,\ell}\qquad \text{in }W^{\alpha,2}(\mathbb{R}^d).
	$$
	Moreover, the sequence $\{q_m\}_{m\in\mathbb{N}}$ satisfies
	$$
	\lim_{m\rightarrow\infty}q_m=q\qquad \text{in }W^{\alpha,2}(\mathbb{R}^d).
	$$
\end{theorem}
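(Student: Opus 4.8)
The plan is to treat both limits through the single affine map $\mathcal{T}_m$ and to extract everything from one family of estimates on the operators $\mathcal{Q}_j$. Since $\mathcal{Q}_j(q)$ contains exactly $j+1$ factors of $q$ (the $j$ factors in $(qR_{k})^j$ together with the one in the argument $qe^{ik\theta_0\cdot(\cdot)}$), the heart of the argument is a pair of bounds of the form
\begin{equation*}
\|\mathcal{Q}_j(q)\|_{W^{\alpha,2}}\le C^{\,j+1}\|q\|_{W^{\alpha,2}}^{\,j+1},\qquad
\|\mathcal{Q}_j(q)-\mathcal{Q}_j(\tilde q)\|_{W^{\alpha,2}}\le (j+1)\,C^{\,j+1}\rho^{\,j}\,\|q-\tilde q\|_{W^{\alpha,2}},
\end{equation*}
valid for $\|q\|_{W^{\alpha,2}},\|\tilde q\|_{W^{\alpha,2}}\le\rho$, with a constant $C=C(d,\alpha,R)$ independent of $j$ and $m$. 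The restriction \eqref{st} on $\alpha$ is precisely what makes these hold: it is the range for which multiplication by $q\in W^{\alpha,2}$ followed by the resolvent $R_{k}$ is bounded on the relevant space, uniformly in $k$, so that iterating $j$ times produces only the geometric factor $C^{j}$ rather than uncontrolled growth. These resolvent/Born estimates (of Kenig--Ruiz--Sogge and Ruiz type), together with the boundedness of multiplication by the smooth cutoff $\phi$ on $W^{\alpha,2}$, constitute the technical core, and I expect the uniformity in $j$ of the constant $C$ to be the main obstacle.

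Granting these estimates, the first limit is a Banach fixed point argument carried out uniformly in $m$. Using $\|\phi q_{\theta_0}\|_{W^{\alpha,2}}\lesssim\|q\|_{W^{\alpha,2}}<A$ (which itself follows by reading \eqref{eq:born_expansion} as an upper bound for $q_{\theta_0}$) and summing the bounds above, I would choose $\rho\sim A$ and $A$ small enough, the content of Remark \ref{pequenez}, so that $\sum_{j\ge1}(j+1)C^{j+1}\rho^{j}<1$. Then $\mathcal{T}_m$ maps the closed ball $\overline{B}(0,\rho)\subset W^{\alpha,2}$ into itself and is a contraction there, with a Lipschitz constant $L<1$ independent of $m$. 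Since $q_{m,1}=0$ lies in the ball, the iterates $q_{m,\ell}$ converge in $W^{\alpha,2}$ to the unique fixed point $q_m=\mathcal{T}_m(q_m)$, which is the first assertion.

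For the second limit I would compare the fixed point $q_m$ with the true potential. Because $q$ is supported where $\phi\equiv1$, multiplying the exact expansion \eqref{eq:born_expansion} by $\phi$ gives $q=\mathcal{T}_m(q)-\phi q_m^r$, whereas $q_m=\mathcal{T}_m(q_m)$. Subtracting and using the uniform Lipschitz bound,
\begin{equation*}
\|q-q_m\|_{W^{\alpha,2}}\le L\,\|q-q_m\|_{W^{\alpha,2}}+\|\phi q_m^r\|_{W^{\alpha,2}},
\qquad\text{hence}\qquad
\|q-q_m\|_{W^{\alpha,2}}\le\frac{1}{1-L}\,\|\phi q_m^r\|_{W^{\alpha,2}}.
\end{equation*}
It therefore suffices to show that the remainder \eqref{resto_m} tends to zero. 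The operator producing $q_m^r$ has the same structure as $\mathcal{Q}_m$ but with the final argument $q\,u_s$ in place of $q\,e^{ik\theta_0\cdot(\cdot)}$, so the same resolvent estimates yield $\|\phi q_m^r\|_{W^{\alpha,2}}\le C^{\,m}\,\|q\|_{W^{\alpha,2}}^{\,m}\,\|q\,u_s\|_{*}$ for a norm $\|\cdot\|_{*}$ that is controlled, when $\|q\|_{W^{\alpha,2}}<A$, by the Born-series solvability of the Lippmann--Schwinger equation \eqref{Lippmann}. Since the smallness of $A$ also forces $C\|q\|_{W^{\alpha,2}}<1$, the factor $\big(C\|q\|_{W^{\alpha,2}}\big)^{m}\to0$ as $m\to\infty$, giving $\|q-q_m\|_{W^{\alpha,2}}\to0$ and completing the proof.
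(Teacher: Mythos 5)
Your proposal follows essentially the same route as the paper: its Steps 1--2 are exactly your uniform-in-$m$ contraction argument for $\mathcal{T}_m$ on a small ball of $W^{\alpha,2}$ (carried out there as boundedness by induction plus a Cauchy-sequence estimate exploiting the multilinearity of $\mathcal{P}_j$), and its Step 3 is your comparison $\|q-q_m\|_{W^{\alpha,2}}\le (1-L)^{-1}\|\phi q_m^r\|_{W^{\alpha,2}}$ combined with the remainder bound $\|q_m^r\|_{W^{\alpha,2}}\le C_2^m\|q\|_{W^{\alpha,2}}^{m+1}$. The technical estimates you grant (the uniform-in-$j$ multilinear bounds, the Born-approximation bound read off from \eqref{eq:born_expansion} with $m=1$, and the decay of the remainder) are precisely the paper's Proposition \ref{proposicion_apendice}, Corollary \ref{Cborn}, and Proposition \ref{resto}.
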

\begin{remark}
	\label{pequenez}
	Following the proof of Theorem \ref{th:cauchy_sequence}, one can see that the smallness condition given in \eqref{pequet} is 
	$$
	\|q\|_{W^{\alpha,2}}< A=\min\left(\frac{1}{C_1},\frac{1}{C_2},\frac{1}{2C_1C_3C_4},\frac{1}{2^{2+\frac{d-1}{2}\left(\frac{1}{2}-\frac{\alpha}{d}\right)}C_5}\right),
	$$
	where $C_1,C_2,C_3,C_4$ and $C_5$ are the constants that appear in 
	\eqref{estimacionPj}, \eqref{eq:estima_resto}, \eqref{control_aproximacion_born}, \eqref{rey_3} and \eqref{lema1} respectively (see below).
\end{remark}
\begin{remark}
	\label{rango}
	Conditions in \eqref{st} imply that $d<5$, and therefore Theorem \ref{th:cauchy_sequence} is valid for $2\le d\le 4.$ Moreover,
	for $d=2,$ the result holds if $0<\alpha<1,$
	for $d=3,$ if $0<\alpha\le 1,$ and
	for $d=4,$ if $2/3<\alpha\le 1.$  
\end{remark}
\begin{remark}
	\label{velocidad}
	In the proof of Theorem \ref{th:cauchy_sequence} (see \eqref{fin} below) we will see that the convergence of the sequence $\{q_m\}_{m\in\mathbb{N}}$ to potential $q$ is faster than $(C_2\|q\|_{W^{\alpha,2}})^{m}$, where $C_2$ is the constant that appears in \eqref{eq:estima_resto} (see below).
\end{remark}
\paragraph{Notation.} 
For $\alpha\in\mathbb{R}$ we introduce the fractional differentiation operator
$$
\Lambda^{\alpha}
=
(1+\Delta)^{\alpha/2}
=
\mathcal{F}^{-1}\left\langle \xi\right\rangle^{\alpha}\mathcal{F},
$$
where $\mathcal{F}$ denotes the Fourier transform and
$\left\langle\xi\right\rangle =(1+\left\vert \xi\right\vert ^{2})^{1/2}.$

We use the Sobolev spaces
\begin{equation*}
\label{sobolev}
W^{\alpha,p}(\mathbb{R}^d)=\{f\in\mathcal{S}^{\prime}(\mathbb{R}^{d}):\Lambda^{\alpha}f\in L^{p}(\mathbb{R}^d)\},\qquad \alpha\in\mathbb{R},1\le p\le\infty,
\end{equation*}
and also their weighted versions
$$
W_{\delta}^{\alpha,p}(\mathbb{R}^d)
=
\{f\in\mathcal{S}^{\prime}(\mathbb{R}^{d}):\Lambda^{\alpha}f\in L_{\delta}^{p}(\mathbb{R}^d)\},
\qquad \alpha,\delta\in\mathbb{R},1\le p\le\infty,
$$
where $L_{\delta}^{p}(\mathbb{R}^d)=\{f:\left\langle x\right\rangle ^{\delta}f\in L^{p}(\mathbb{R}^d)\}.$

Throughout this paper $C$ will denote a positive constant that may change from line to line and depend on some parameters such as $d,\alpha$ or $R$. This dependence will be indicated when relevant.
In order to know in detail the required smallness condition on the potential $q,$ in some cases, we will name constants by $C_n$ with $n$ varying in $\mathbb{N}.$ Moreover we will write $C_{n_1,\ldots,n_m}$ to indicate the product $C_{n_1}\ldots C_{n_m},$ $n_1,\ldots n_m\in\mathbb{N}.$

The rest of this paper is organized as follows.
The proof of Theorem \ref{th:cauchy_sequence} is given in the second section and also several lemmas needed in the proof. 
In the third section we illustrate these results with several numerical experiments. 
\section{Proofs}
This section is devoted to the proof of Theorem \ref{th:cauchy_sequence}.
We split it into two subsections. The first one contains the proof itself and the statement of two results (see Propositions \ref{proposicion_apendice} and \ref{resto} below) which are the key points in the proof. The proofs of these propositions are quite technical, and require of several known results, so we postpone them to the second subsection.
\subsection{Proof of Theorem \ref{th:cauchy_sequence}}
The first key point in the proof of Theorem \ref{th:cauchy_sequence} are some estimates for a family of operators that generalize the operators $\mathcal{Q}_j$ given in \eqref{Q_operator}.

For every $j\in\mathbb{N}$, we introduce the following multilinear operator defined via its Fourier transform as follows
\begin{equation}\label{eq:decomposition}
\mathcal{P}_j(\mathbf{f})(x)
=\int_{\mathbb{R}^d}e^{ix\cdot\xi}\widehat{\mathcal{P}_j(\mathbf{f})}(\xi)d\xi
=\left(\int_{H_{\theta_0}}+\int_{H_{-\theta_0}}\right)e^{ix\cdot\xi}\widehat{\mathcal{P}_j(\mathbf{f})}(\xi)d\xi,
\end{equation}
where $\mathbf{f}=(f_1,f_2,\ldots,f_{j+1}),$ 
$\theta_0 \in S^{d-1}$  and
$H_{\pm\theta_0}$ is given in \eqref{descomposicion}.

Taking into account that any $\xi\in H_{\pm\theta_0}$ can be written in a unique way as $\xi=k(\theta\mp\theta_0)$, with $k=k(\xi)>0$ and $\theta=\theta(\xi)\in S^{d-1}$, we define
\begin{equation}\label{eq:Pj}
\widehat{\mathcal{P}_j(\mathbf{f})}(\xi):=\int_{\mathbb{R}^d}e^{- ik \theta  \cdot y}(f_{j+1}R_{k}\ldots f_2R_{k})(f_1e^{ ik\theta_0\cdot(\cdot) })(y)dy,
\end{equation}
Observe that in the particular case $f_i=q, \; i=1,2, \cdot  \cdot \cdot ,j+1$, we have that
\begin{equation}\label{rey_1}
\widehat{\mathcal{P}_j(\mathbf{f})}(\xi)=\widehat{\mathcal{Q}_j(q)}(\xi),\qquad\xi\in\mathbb{R}^d.
\end{equation}
\begin{proposition}\label{proposicion_apendice} Let $d\ge 2$ and $\alpha$ satisfying \eqref{st}.
For each $j\in\mathbb{N}$ fixed, let $\mathbf{f}=(f_1,f_2,\cdots f_{j+1})$ with $f_{\ell}\in W^{\alpha,2}(\mathbb{R}^d)$ and compactly supported with support in B(0,R), for $\ell=1,\ldots j+1$. Then, there exits a constant $C_1:=C_1(d,\alpha ,R)$ such that
\begin{equation}\label{estimacionPj}
\left\|  \mathcal{P}_j(\mathbf{f}) \right\|_{W^{\alpha ,2} } \leq  C_1^j   \prod_{\ell =1}^{j+1}
     \|f_{\ell}\|_{W^{\alpha ,2}} .
\end{equation}
\end{proposition}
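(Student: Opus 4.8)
The plan is to estimate $\mathcal{P}_j(\mathbf{f})$ on the Fourier side, exploiting that $\widehat{\mathcal{P}_j(\mathbf{f})}(\xi)$ is nothing but the restriction of a Fourier transform to an Ewald sphere. Introduce the physical-space iterates $w_1(\cdot;k)=f_1e^{ik\theta_0\cdot(\cdot)}$ and $w_{\ell+1}(\cdot;k)=f_{\ell+1}R_k\big(w_\ell(\cdot;k)\big)$, so that by \eqref{eq:Pj}, for $\xi=k(\theta\mp\theta_0)\in H_{\pm\theta_0}$ one has $\widehat{\mathcal{P}_j(\mathbf{f})}(\xi)=\widehat{w_{j+1}(\cdot;k)}(k\theta)$, i.e. the value at $\eta=k\theta$ of $\widehat{w_{j+1}}$ restricted to the sphere $|\eta|=k$. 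I would then write $\|\mathcal{P}_j(\mathbf{f})\|_{W^{\alpha,2}}^2=\int_{\mathbb{R}^d}\langle\xi\rangle^{2\alpha}|\widehat{\mathcal{P}_j(\mathbf{f})}(\xi)|^2\,d\xi$, split over $H_{\theta_0}$ and $H_{-\theta_0}$, and on each piece change variables $\xi\mapsto(k,\theta)$ through $\xi=k(\theta\mp\theta_0)$, whose Jacobian is of size $k^{d-1}$ times an angular factor (degenerating only in the forward direction $\theta=\theta_0$, which is why data for both $\pm\theta_0$ enter). Since $|\xi|\le 2k$ on the relevant set, $\langle\xi\rangle\lesssim\langle k\rangle$, and the problem reduces to a bound of the shape
\[
\int_0^\infty\langle k\rangle^{2\alpha}\left(\int_{S^{d-1}}|\widehat{w_{j+1}(\cdot;k)}(k\theta)|^2\,d\sigma(\theta)\right)k^{d-1}\,dk\le C_1^{2j}\prod_{\ell=1}^{j+1}\|f_\ell\|_{W^{\alpha,2}}^2.
\]

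The core is then a uniform-in-$k$ recursion for $w_\ell$. I would measure $w_\ell(\cdot;k)$ in a weighted, $L^2$-based Sobolev norm adapted to the fixed support ball, and control $w_{\ell+1}$ in terms of $w_\ell$ by two ingredients, both with constants independent of $k>0$: first, a weighted-Sobolev estimate for the outgoing resolvent $R_k$ supplying a fixed gain of regularity; and second, a fractional product (Leibniz-type) estimate showing that multiplication by a compactly supported $f_{\ell+1}\in W^{\alpha,2}$ is bounded, with norm controlled by $\|f_{\ell+1}\|_{W^{\alpha,2}}$, on the space produced by $R_k$. The regularity gained from $R_k$ is exactly what is spent against the rough multiplier $f_{\ell+1}$, and compact support lets me pass freely between weighted and unweighted norms. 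At the base, $w_1=f_1e^{ik\theta_0\cdot(\cdot)}$ costs only $\|f_1\|_{W^{\alpha,2}}$ up to a controlled power of $\langle k\rangle$ coming from the modulation. Composing these $j$ steps produces $\|w_{j+1}(\cdot;k)\|\le C^j\langle k\rangle^{a}\prod_{\ell}\|f_\ell\|_{W^{\alpha,2}}$ with an explicit exponent $a$.

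To treat the inner angular integral I would invoke a restriction (trace) estimate on the sphere of radius $k$, bounding $\int_{S^{d-1}}|\widehat{w_{j+1}}(k\theta)|^2\,d\sigma$ by the weighted Sobolev norm of $w_{j+1}$ estimated above, with the correct power of $k$. It is the availability of this restriction estimate at the $L^2$ level, combined with the product estimate, that pins $\alpha$ to the window \eqref{st}: the upper threshold $\alpha<d/2$ is the product/embedding limit, while the lower bound $\alpha>\tfrac{d}{2}-\tfrac{d}{d-1}$ is what the sphere restriction demands. This is precisely the range of the base bilinear ($j=1$) estimate underlying Ruiz's analysis of the fixed-angle Born approximation \cite{Ru1}. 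Inserting this bound and integrating in $k$—so that the powers of $k$ from the resolvent, the modulation, the Jacobian $k^{d-1}$, the weight $\langle k\rangle^{2\alpha}$ and the restriction all balance into a convergent integral—yields \eqref{estimacionPj} with the geometric constant $C_1^{j}$.

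The main obstacle, and the reason the argument is postponed, is uniformity in $k$. The outgoing resolvent $R_k$ behaves very differently as $k\to 0$ and $k\to\infty$, so the weighted-Sobolev bounds must be arranged to hold with $k$-independent constants across both regimes, and the various powers of $k$ accumulated along the $j$-fold composition, the modulation, and the change of variables must cancel exactly so that the final $dk$-integral converges with a constant uniform in $j$. Reconciling the regularity gain of $R_k$ with the roughness of the $W^{\alpha,2}$ multipliers on precisely the range \eqref{st} is the delicate point; once these single-step mapping properties are secured, the multilinear estimate follows by straightforward composition.
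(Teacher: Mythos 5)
Your overall architecture --- passing to the Fourier side, viewing $\widehat{\mathcal{P}_j(\mathbf{f})}$ as a restriction to Ewald spheres, changing variables $\xi\mapsto(k,\theta)$, estimating the composition $f_{j+1}R_k\cdots f_2R_k(f_1e^{ik\theta_0\cdot(\cdot)})$ step by step via resolvent and Zolesio product estimates, and closing with a sphere restriction theorem --- is exactly the paper's. But your first reduction contains a fatal loss: replacing $\langle\xi\rangle^{2\alpha}$ by $\langle k\rangle^{2\alpha}$ and discarding the angular factor. On the Ewald sphere $|\xi|=k|\theta-\theta_0|$ vanishes at the forward direction $\theta=\theta_0$, and the paper keeps this vanishing: for $k\ge 1$ (using $\alpha\le1$) the weight together with the Jacobian is bounded by $Ck^{2\alpha}|\theta-\theta_0|^{2\alpha}$, and the surviving factor $w(\theta)=|\theta-\theta_0|^{\alpha}$ is fed into the \emph{weighted} restriction estimate \eqref{lema2}, which gains an extra $k^{-\alpha}$ over the unweighted estimate \eqref{rayosX}. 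After squaring, this $k^{-2\alpha}$ exactly cancels the $k^{2\alpha}$, and only then does the choice \eqref{eleccion} make the $dk$-integral converge under the lower bound in \eqref{st}. In your reduction that cancellation is unavailable: with the unweighted restriction estimate, the best total exponent of $k$ obtainable from \eqref{lema1}, \eqref{zolesioI2} and the constraint \eqref{monedero_4} is $2\alpha-j\bigl(2+(d-1)(\tfrac{\alpha}{d}-\tfrac12)\bigr)$, and requiring this to be $<-1$ at $j=1$ forces $\alpha<1/3$ for $d=2$ and $\alpha<0$ for $d=3$ --- far short of \eqref{st}.

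Worse, the reduced inequality you state is not merely out of reach of these tools; it is false for $\alpha\ge1/2$. Take $j=1$ and nonnegative, nontrivial $f_1,f_2$: since $R_{k,\theta_0}$ behaves like $k^{-1}$ times the X-ray transform along $\theta_0$ as $k\to\infty$, the forward value $\widehat{w_2}(k\theta_0)=\int f_2\,R_{k,\theta_0}f_1$ decays exactly like $k^{-1}$ and no faster, while $\widehat{w_2}(k\theta)$ varies on the angular scale $|\theta-\theta_0|\sim1/k$ because $w_2$ is supported in $B(0,2R)$. Hence the cap $|\theta-\theta_0|\le c/k$ contributes at least $c\int_1^\infty k^{2\alpha}\,k^{-2}\,k^{-(d-1)}\,k^{d-1}\,dk=c\int_1^\infty k^{2\alpha-2}\,dk=\infty$ to your left-hand side. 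The missing idea is thus precisely Lemma \ref{le:Sk}: the vanishing of the weight at the forward direction is what converts angular information into the decisive $k$-decay. A second, lesser point: your hope for single-step estimates ``with constants independent of $k>0$'' cannot be realized; the paper necessarily splits $k<1$ from $k>1$ and uses structurally different ingredients (the two-derivative smoothing bound \eqref{zapatero_1} with \eqref{zolesioI1} and \eqref{rayosX} for small $k$, versus the conjugated-resolvent decay \eqref{lema1} with \eqref{zolesioI2} and \eqref{lema2} for large $k$), though you do correctly flag this regime issue as delicate.
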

The other key point in the proof of Theorem \ref{th:cauchy_sequence} concerns with the error term $q_m^r$ defined in \eqref{resto_m}.
\begin{proposition}\label{resto}
For $d\ge 2$ and $\alpha$ satisfying 
\begin{equation}
\label{sqrm}
0<\alpha\le 1,\quad\textrm{ and }\quad\frac{d}{2}-2<\alpha<\frac{d}{2},
\end{equation}
let $q\in W^{\alpha,2}(\mathbb{R}^d)$ be a real valued function with compact support in $B(0,R)$ and such that 
\begin{equation}
\label{peque}
\|q\|_{W^{\alpha,2}}< \frac{1}{2^{2+\frac{d-1}{2}\left(\frac{1}{2}-\frac{\alpha}{d}\right)}C_5},
\end{equation} 
where $C_5$ is the constant that appears in \eqref{lema1}.
Then, for every $m\in\mathbb{N}$ there exists a constant $C_2:=C_2(d,\alpha ,R)$ such that 
\begin{equation}\label{eq:estima_resto}
\|q_m^r\|_{W^{\alpha,2}}\leq C_2^m\|q\|_{W^{\alpha,2}}^{m+1}.
\end{equation}
\end{proposition}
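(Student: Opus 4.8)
The plan is to recognize the remainder $q_m^r$ as a tail of the Born series having exactly the multilinear structure controlled by Proposition \ref{proposicion_apendice}, the only difference being that the incident plane wave $e^{ik\theta_0\cdot(\cdot)}$ appearing in $\mathcal{Q}_j$ (see \eqref{Q_operator}) is replaced by the scattered field $u_s$. Concretely, from \eqref{resto_m} the function $q_m^r$ is built from $(qR_{k})^m$ applied to the compactly supported seed $q\,u_s(\cdot,\theta_0,k)$ and then integrated against $e^{-ik\theta\cdot y}$, which is precisely the operator $\mathcal{P}_m$ of \eqref{eq:Pj} with entries $f_2=\cdots=f_{m+1}=q$ and with the seed $f_1e^{ik\theta_0\cdot(\cdot)}$ replaced by $q\,u_s$. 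Thus the proof splits into two parts: first, a uniform-in-$k$ bound for the seed $q\,u_s$; second, feeding this seed into the multilinear machinery behind Proposition \ref{proposicion_apendice}.

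First I would control $u_s$. Rewriting the Lippmann-Schwinger equation \eqref{Lippmann} as $u_s=R_k(qe^{ik\theta_0\cdot(\cdot)})+R_k(qu_s)$ and invoking the resolvent estimate \eqref{lema1} (with constant $C_5$), the operator $qR_k$ is bounded, uniformly in $k>0$, on the relevant weighted Sobolev space, with operator norm at most $2^{2+\frac{d-1}{2}\left(\frac12-\frac\alpha d\right)}C_5\|q\|_{W^{\alpha,2}}$. The smallness hypothesis \eqref{peque} makes this quantity strictly less than $1$, so the Neumann series for $u_s$ converges and yields a bound of the form $\|q\,u_s\|_{\ast}\le C\|q\|_{W^{\alpha,2}}$, uniformly in $k$, in the weighted norm $\|\cdot\|_{\ast}$ underlying \eqref{lema1}. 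This is the step where the non-compact support of $u_s$ is handled: one cannot place $u_s$ itself in the compactly supported space $W^{\alpha,2}$ used by Proposition \ref{proposicion_apendice}, but the product $q\,u_s$ is compactly supported and controllable in the weighted norm.

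Next I would iterate the single-factor bound. Estimating $q_m^r$ exactly as in the proof of Proposition \ref{proposicion_apendice}, each of the $m$ factors $qR_{k}$ contributes the same constant $2^{2+\frac{d-1}{2}\left(\frac12-\frac\alpha d\right)}C_5\|q\|_{W^{\alpha,2}}<1$, while the outer integration against $e^{-ik\theta\cdot y}$ is absorbed into the $W^{\alpha,2}$-norm by the same argument that produces \eqref{estimacionPj}. Combining these $m$ factors with the seed bound $\|q\,u_s\|_{\ast}\le C\|q\|_{W^{\alpha,2}}$ gives
$$\|q_m^r\|_{W^{\alpha,2}}\le \left(2^{2+\frac{d-1}{2}\left(\frac12-\frac\alpha d\right)}C_5\right)^m\|q\|_{W^{\alpha,2}}^{m}\cdot C\|q\|_{W^{\alpha,2}}\le C_2^m\|q\|_{W^{\alpha,2}}^{m+1},$$
which is \eqref{eq:estima_resto}, with $C_2$ absorbing $C$ and the displayed power of two. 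As a consistency check, the same identity $u_s=R_k(qe^{ik\theta_0\cdot(\cdot)})+R_k(qu_s)$ shows that $q_m^r=\mathcal{Q}_{m+1}(q)+q_{m+1}^r$, so $q_m^r$ is the tail $\sum_{j\ge m+1}\mathcal{Q}_j(q)$ of the Born series once convergence is established, confirming the expected decay rate.

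The main obstacle is obtaining the per-factor bound for $qR_k$ uniformly in the wave number $k>0$. The resolvent $R_k$ does not act on fixed Sobolev spaces with a $k$-independent norm, so the heart of the matter is the limiting-absorption-type estimate \eqref{lema1}: it must hold uniformly in $k$, and the explicit power $2^{2+\frac{d-1}{2}\left(\frac12-\frac\alpha d\right)}$ precisely records how the resolvent bound depends on the frequency and on the admissible range of $\alpha$ in \eqref{sqrm}. Ensuring that the accumulated constant remains genuinely geometric in $m$ (and not factorial) relies on this uniformity, together with the smallness \eqref{peque}, which guarantees convergence of both the Neumann series for $u_s$ and the resulting geometric series in $m$.
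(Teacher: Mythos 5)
Your high-frequency strategy is essentially the one the paper uses: for $k$ bounded away from zero, the paper multiplies the Lippmann--Schwinger equation by $q e^{-ik\theta_0\cdot x}$, uses \eqref{lema1} together with the smallness \eqref{peque} to absorb $\|R_{k,\theta_0}(e^{-ik\theta_0\cdot(\cdot)}qu_s)\|_{W^{\alpha,r_0}_{-\delta}}$ into the left-hand side (your Neumann/absorption step), and then feeds the resulting seed bound for $e^{-ik\theta_0\cdot(\cdot)}qu_s$ into the same multilinear chain (restriction \eqref{lema2}, Zolesio \eqref{zolesioI2}, resolvent \eqref{lema1}) that proves Proposition \ref{proposicion_apendice}. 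But there is a genuine gap at low frequency: your claim that $qR_k$ is bounded \emph{uniformly in $k>0$} with operator norm at most $2^{2+\frac{d-1}{2}\left(\frac12-\frac{\alpha}{d}\right)}C_5\|q\|_{W^{\alpha,2}}$ is false. The admissibility constraints in \eqref{lema1} force $0\le \frac1t-\frac1r\le\frac{2}{d+1}$, hence the exponent $-1+\frac{d-1}{2}\left(\frac1t-\frac1r\right)\le-\frac{2}{d+1}<0$, so the factor $k^{-1+\frac{d-1}{2}\left(\frac1t-\frac1r\right)}$ blows up as $k\to0$; the power of two you quote comes from estimating $k^{b}\le 2^{-b}$ with $b<0$, which is legitimate only for $k\ge 1/2$. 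Consequently, under \eqref{peque} alone your Neumann series for $u_s$ need not converge for small $k$, your uniform seed bound $\|qu_s\|_{\ast}\le C\|q\|_{W^{\alpha,2}}$ fails (in fact $\|u_s\|_{W^{2,2}_{-\delta}}$ genuinely blows up like $k^{-1}$, respectively $(k\log k)^{-1}$ for $d=2$, as $k\to0$), and the geometric accumulation of constants collapses near $k=0$. A secondary problem at large $k$: a bound that is merely uniform in $k$ cannot make $\int_{1/2}^{\infty}k^{d-1+2\alpha}\|\cdot\|^2_{L^2(S^{d-1})}\,dk$ converge; one must retain the negative powers of $k$ produced by \eqref{lema1} and \eqref{lema2}, which is exactly what the paper's bookkeeping with the exponents $a$ and $b$ and the choice \eqref{eleccion} does.

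The paper closes these gaps by splitting the frequency integral at $k=1/2$. On $k<1/2$ it uses entirely different input from yours: the small-$k$ resolvent estimate \eqref{zapatero_1} (uniform for $k$ in a compact interval), the Zolesio estimate \eqref{zolesioI1} to pass between $W^{2,2}_{-\delta}$ and $L^2_{\delta}$, existence and uniqueness of $u_s$ for each fixed $k>0$ proved via the Fredholm alternative in Theorem \ref{tpds} (a Neumann series being unavailable there), and the zero-energy asymptotics \eqref{murata_1}--\eqref{murata_2} of Jensen--Kato--Murata, whose $k^{-1}$ blowup is integrable against the Jacobian factor $k^{d-1}$ coming from the change of variables \eqref{cambio}. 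Without some substitute for these low-frequency ingredients, your argument does not yield \eqref{eq:estima_resto}; it proves at best the analogous bound with the $\xi$-integration restricted to $|k(\xi)|\ge 1/2$.
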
 
We also need to control the Born approximation defined in \eqref{eq:Born_ap}.
\begin{corolario}
	\label{Cborn}
	For $d\ge 2$ and $\alpha$ satisfying \eqref{st},
	let $q\in W^{\alpha,2}(\mathbb{R}^d)$ be a real valued function with compact support in $B(0,R)$ satisfying \eqref{peque}.
	Then, there exists a constant $C_3:=C_3(d,\alpha ,R)$ such that 
\begin{equation}\label{control_aproximacion_born}
\| q_{\theta_0}\|_{W^{\alpha,2} } <  C_3 \|q\|_{W^{\alpha , 2} }.
\end{equation}
\end{corolario}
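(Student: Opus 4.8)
The plan is to extract $q_{\theta_0}$ directly from the Born series \eqref{eq:born_expansion} at its lowest order. Taking $m=0$ there (so that the sum $\sum_{j=1}^m\widehat{\mathcal{Q}_j(q)}$ is empty)---equivalently, combining the definition \eqref{eq:Born_ap} with \eqref{amplitud2} evaluated at $\xi=k(\xi)(\theta(\xi)\mp\theta_0)$---gives the exact identity
\[
\widehat{q_{\theta_0}}(\xi)=\widehat{q}(\xi)+\widehat{q_0^r}(\xi),\qquad\xi\in\mathbb{R}^d,
\]
where $q_0^r$ is the residual \eqref{resto_m} with $m=0$, i.e.\ $\widehat{q_0^r}(\xi)=\int_{\mathbb{R}^d}e^{-ik(\xi)\theta(\xi)\cdot y}q(y)u_s(y,\theta_0,k(\xi))\,dy$. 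Thus the Born approximation is the true potential perturbed by a single error term carrying the scattered wave $u_s$.

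With this identity the corollary reduces to Proposition \ref{resto}. First I would check that the range \eqref{st} assumed here is contained in the range \eqref{sqrm} of that proposition (indeed, for $d\ge 2$ one has $d/(d-1)\le 2$, so $\tfrac{d}{2}-\tfrac{d}{d-1}\ge\tfrac{d}{2}-2$) and that the smallness hypothesis \eqref{peque} coincides, so the proposition applies. I would then invoke its estimate \eqref{eq:estima_resto} at $m=0$, which reads
\[
\|q_0^r\|_{W^{\alpha,2}}\le C_2^{0}\,\|q\|_{W^{\alpha,2}}^{0+1}=\|q\|_{W^{\alpha,2}},
\]
and conclude by the triangle inequality applied to $q_{\theta_0}=q+q_0^r$ that $\|q_{\theta_0}\|_{W^{\alpha,2}}\le 2\|q\|_{W^{\alpha,2}}$, giving the claim with $C_3=2$. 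To upgrade this to the strict inequality stated, I would use that under \eqref{peque} the residual actually obeys the sharper second-order bound $\|q_0^r\|_{W^{\alpha,2}}\le C\|q\|_{W^{\alpha,2}}^2$, obtained by controlling $u_s$ in $W^{\alpha,2}$ by a constant times $\|q\|_{W^{\alpha,2}}$ and pairing it against $q$; then $\|q_{\theta_0}\|_{W^{\alpha,2}}\le\|q\|_{W^{\alpha,2}}\bigl(1+C\|q\|_{W^{\alpha,2}}\bigr)<2\|q\|_{W^{\alpha,2}}$ for $q\neq 0$.

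The only delicate point is the use of Proposition \ref{resto} at the endpoint $m=0$. If one wishes to keep $m\ge 1$ there, I would establish the $m=0$ residual bound separately, using exactly the same ingredients: bound $u_s$ through the solution operator $(I-R_{k}q)^{-1}$ of the Lippmann-Schwinger equation \eqref{Lippmann}, whose Neumann series converges under the smallness assumption by the resolvent estimate carrying the constant $C_5$ in \eqref{lema1}, and then estimate the resulting integral as the $j=0$ instance of the multilinear operator \eqref{eq:Pj}. I expect this bookkeeping to be the main, and only mild, obstacle; the algebraic identity $q_{\theta_0}=q+q_0^r$ does the essential work, which is exactly why the statement is a corollary of Proposition \ref{resto} rather than an independent result.
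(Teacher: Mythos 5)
Your starting identity is correct: \eqref{amplitud2} read on the Ewald spheres is exactly $\widehat{q_{\theta_0}}=\widehat q+\widehat{q_0^r}$, where $q_0^r$ is the residual \eqref{resto_m} at $m=0$. The gap is the step where you invoke Proposition \ref{resto} at $m=0$. That proposition is stated and proved for $m\in\mathbb{N}$, i.e.\ $m\ge 1$ (its proof manipulates $(qR_k)^m$, diagrams iterated $(m-1)$ times, and sums $\sum_{\ell=1}^m$), and its conclusion degenerates at the endpoint: \eqref{eq:estima_resto} at $m=0$ would read $\|q_0^r\|_{W^{\alpha,2}}\le C_2^0\|q\|_{W^{\alpha,2}}=\|q\|_{W^{\alpha,2}}$, a bound with constant exactly $1$ that no argument in the paper delivers; what the proof actually gives at $m=0$ is $\|q_0^r\|_{W^{\alpha,2}}\le C\|q\|_{W^{\alpha,2}}$ with a nontrivial constant $C(d,\alpha,R)$. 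So your conclusion ``$C_3=2$'' is unsupported. There is a second, structural reason $m=0$ is excluded: in the proof of Proposition \ref{resto} the high-frequency integral converges if and only if $2(m+1)+(d-1)(m+1)\left(\frac{\alpha}{d}-\frac{1}{2}\right)>1$, which at $m=0$ forces $\alpha>\frac{d}{2}-\frac{d}{d-1}$, i.e.\ the stronger condition \eqref{st} rather than the proposition's own hypothesis \eqref{sqrm}. Since Corollary \ref{Cborn} does assume \eqref{st}, your fallback --- re-deriving the $m=0$ residual bound from the Lippmann--Schwinger equation, the absorption argument and the restriction estimates --- does go through, but it amounts to re-running essentially the entire proof of Proposition \ref{resto}, not to citing it.

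The paper avoids all of this with a different choice of truncation: take $m=1$ in \eqref{eq:born_expansion}, so that $\widehat{q_{\theta_0}}=\widehat q+\widehat{\mathcal{Q}_1(q)}+\widehat{q_1^r}$, and bound the two error terms by results that apply verbatim: $\|\mathcal{Q}_1(q)\|_{W^{\alpha,2}}\le C_1\|q\|^2_{W^{\alpha,2}}$ by \eqref{rey_1} and \eqref{estimacionPj} with $j=1$, and $\|q_1^r\|_{W^{\alpha,2}}\le C_2\|q\|^2_{W^{\alpha,2}}$ by \eqref{eq:estima_resto} with $m=1$, both squarely inside the stated ranges (you correctly checked that \eqref{st} implies \eqref{sqrm} and that \eqref{peque} matches). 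With the smallness \eqref{peque} this yields
\begin{equation*}
\|q_{\theta_0}\|_{W^{\alpha,2}}\le\bigl(1+(C_1+C_2)\|q\|_{W^{\alpha,2}}\bigr)\|q\|_{W^{\alpha,2}}<C_3\|q\|_{W^{\alpha,2}},
\end{equation*}
which is the strict inequality \eqref{control_aproximacion_born} without any extra second-order refinement. If you want to keep your $m=0$ route, the honest version of it is your fallback paragraph promoted to the main argument; the quickest repair, however, is simply to shift from $m=0$ to $m=1$, where both propositions are available exactly as stated.
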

\begin{proof}
From \eqref{eq:born_expansion} with $m=1,$ we have that
$$\| q_{\theta_0}\|_{W^{\alpha,2}}\le  \|q\|_{W^{\alpha , 2}}+\left\| \mathcal{Q}_1(q) \right\|_{W^{\alpha , 2}}+ \left\|q_1^r  \right\|_{W^{\alpha , 2}}.$$
The result follows from here using \eqref{rey_1} and \eqref{estimacionPj} for $j=1,$ \eqref{eq:estima_resto} for $m=1,$ and \eqref{peque}.
\end{proof}
Finally, we need the following result concerning the product of functions in Sobolev spaces due to Zolesio (see \cite{Z}). 
\begin{lemma}
	\label{ZolesioParticular}
	Let $0\le \alpha\le s$ and $s>d/2,$ and
	let $\phi$ be the cut-off function defined in \eqref{movil}, then there exists a constant $C_4:=C_4(d,\alpha ,R)>0$ such that
	\begin{equation}\label{rey_3}
	\|\phi g \|_{W^{\alpha,2}} \leq C \| \phi \|_{W^{s,2}} \| g \|_{W^{\alpha,2}} \leq C_4 \| g \|_{W^{\alpha,2}}.
	\end{equation}
\end{lemma}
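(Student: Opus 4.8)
The plan is to separate the two inequalities in \eqref{rey_3} and treat them by different means. The second one, $\|\phi g\|_{W^{\alpha,2}}\le C_4\|g\|_{W^{\alpha,2}}$, is the estimate actually used in the rest of the paper, and for it the only feature of $\phi$ that matters is that it is a \emph{fixed} function in $C^\infty$ with compact support; in particular $\widehat\phi$ is a Schwartz function, so $\langle\cdot\rangle^{\beta}\widehat\phi\in L^1(\mathbb{R}^d)$ for every $\beta\ge 0$. I would therefore prove the second inequality directly by Fourier analysis and absorb the resulting finite constant (depending only on $d$, $\alpha$ and $R$) into $C_4$. The first inequality, in which the constant is displayed as $C\|\phi\|_{W^{s,2}}$, is the general Sobolev multiplication statement of Zolesio \cite{Z}, which I would invoke after noting that, for the exponent triple $(s,\alpha,\alpha)$, its admissibility conditions reduce exactly to the hypotheses $\alpha\le s$ and $s>d/2$ that we are assuming.

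To carry out the direct argument, recall that $\widehat{\phi g}$ equals $\widehat\phi*\widehat g$ up to a dimensional constant, so that
\begin{equation*}
\langle\xi\rangle^{\alpha}\,|\widehat{\phi g}(\xi)|\le C\int_{\mathbb{R}^d}\langle\xi\rangle^{\alpha}\,|\widehat\phi(\xi-\eta)|\,|\widehat g(\eta)|\,d\eta .
\end{equation*}
Since $\alpha\ge 0$, the elementary bound $\langle\xi\rangle^{\alpha}\le C(\langle\xi-\eta\rangle^{\alpha}+\langle\eta\rangle^{\alpha})$ splits the right-hand side into a convolution of $\langle\cdot\rangle^{\alpha}|\widehat\phi|$ with $|\widehat g|$ and a convolution of $|\widehat\phi|$ with $\langle\cdot\rangle^{\alpha}|\widehat g|$. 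Applying Young's inequality $L^1*L^2\to L^2$ to each and taking $L^2$ norms in $\xi$ yields
\begin{equation*}
\|\phi g\|_{W^{\alpha,2}}\le C\big(\|\langle\cdot\rangle^{\alpha}\widehat\phi\|_{L^1}+\|\widehat\phi\|_{L^1}\big)\|g\|_{W^{\alpha,2}} ,
\end{equation*}
and both $L^1$ norms of $\widehat\phi$ are finite by the Schwartz decay noted above; this settles the second inequality. The same computation recovers the first inequality once one bounds the two $L^1$ norms by $\|\phi\|_{W^{s,2}}$ via Cauchy--Schwarz, for instance $\|\langle\cdot\rangle^{\alpha}\widehat\phi\|_{L^1}\le\|\langle\cdot\rangle^{\alpha-s}\|_{L^2}\|\langle\cdot\rangle^{s}\widehat\phi\|_{L^2}$, \emph{provided} $s$ is chosen with $s-\alpha>d/2$ (the factor $\|\widehat\phi\|_{L^1}\le C\|\phi\|_{W^{s,2}}$ needs only $s>d/2$).

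The step I expect to be the main obstacle is precisely this last provision. In the full range permitted by the hypothesis, $s$ may be taken arbitrarily close to $d/2$ while $\alpha$ approaches $s$, so that $\langle\cdot\rangle^{\alpha-s}\notin L^2$ and the crude Young estimate on the term carrying the weight $\langle\cdot\rangle^{\alpha}$ on $\widehat\phi$ breaks down (it would demand $s>\alpha+d/2$). To control that term by $\|\phi\|_{W^{s,2}}$ for every admissible $(\alpha,s)$ one needs a Littlewood--Paley or paraproduct decomposition, matching the high-frequency output against the regularity $s\ge\alpha$ of $\phi$ while measuring $g$ in $L^2$ and summing by Bernstein's inequality together with the embedding $W^{s,2}\hookrightarrow L^\infty$; this is the heart of Zolesio's theorem, and it is why I would cite \cite{Z} for the first inequality rather than reprove it. Since in our application $\phi$ is smooth and fixed, however, the elementary route above already suffices for the second inequality, which is all that is required later.
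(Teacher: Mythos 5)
Your proposal is correct, but it takes a different route from the paper, because the paper in fact offers no proof at all: Lemma \ref{ZolesioParticular} is stated there as a known multiplication theorem and simply attributed to Zolesio \cite{Z} (with the second inequality in \eqref{rey_3} being the trivial observation that $\|\phi\|_{W^{s,2}}<\infty$ for the fixed cut-off $\phi\in\mathcal{C}^\infty_c$). What you add is a genuinely self-contained argument for the operative bound $\|\phi g\|_{W^{\alpha,2}}\le C_4\|g\|_{W^{\alpha,2}}$: since $\widehat{\phi}$ is Schwartz, the inequality $\langle\xi\rangle^{\alpha}\lesssim\langle\xi-\eta\rangle^{\alpha}+\langle\eta\rangle^{\alpha}$ (valid for $\alpha\ge 0$) together with Young's convolution inequality $L^1*L^2\to L^2$ gives the estimate with constant controlled by $\|\langle\cdot\rangle^{\alpha}\widehat{\phi}\|_{L^1}+\|\widehat{\phi}\|_{L^1}$, both finite; this is exactly the part of the lemma used in Steps 1--3 of the proof of Theorem \ref{th:cauchy_sequence}, so your elementary proof suffices for everything downstream, and falling back on the citation \cite{Z} for the first inequality mirrors what the paper itself does. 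One small correction to your closing commentary: the full-range inequality $\|\phi g\|_{W^{\alpha,2}}\le C\|\phi\|_{W^{s,2}}\|g\|_{W^{\alpha,2}}$ for all $0\le\alpha\le s$, $s>d/2$ does not genuinely require Littlewood--Paley or paraproducts. It follows from an elementary splitting of the convolution into the regions $|\eta|\ge|\xi-\eta|$ and $|\eta|<|\xi-\eta|$: the first region is handled by Young exactly as in your argument, and in the second one applies Cauchy--Schwarz with the weight $\langle\eta\rangle^{\alpha}$, reducing matters to the bound $\int_{|\eta|<|\zeta|}\langle\eta\rangle^{-2\alpha}d\eta\lesssim\langle\zeta\rangle^{\max(d-2\alpha,0)}\log\langle\zeta\rangle$, which is then absorbed using $s>d/2$ and $\alpha\le s$. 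Since you defer to \cite{Z} for that inequality anyway, this overstatement does not create a gap in your proof.
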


\emph{Proof of Theorem \ref{th:cauchy_sequence}.}
We split the proof in three steps. In the first one we will prove that for each $m\in\mathbb{N},$ the sequence $\left\{ q_{m,\ell} \right\}_{\ell \in \mathbb{N}}$  is bounded in the space $W^{\alpha, 2}(\mathbb{R}^d).$ In the second one we will see that  such sequence is a Cauchy sequence in that space, thus it converges to a function $q_m.$ Finally, in the third one we will prove that $q$ is the limit in $W^{\alpha, 2}(\mathbb{R}^d)$ of $q_m$ as $m$ goes to infinity.

\underline{STEP 1}. 
We will prove the boundedness by induction on $\ell.$ Moreover, for each $m\in\mathbb{N},$ we will prove that
\begin{equation}\label{eq:2estimaVn}
\|q_{m,\ell}\|_{W^{\alpha,2}}\leq 2C_{3,4}\| q\|_{W^{\alpha,2}},\qquad \forall\,\ell\ge 2,
\end{equation}
whenever $\alpha$ satisfies \eqref{st} and $q\in W^{\alpha,2}(\mathbb{R}^d)$ is a real valued function with compact support in $B(0,R)$ satisfying \eqref{peque} and such that 
\begin{equation}
\label{peque1}
\|q\|_{W^{\alpha,2}}\le \frac{1}{2C_{1,3,4}(1+2C_4)}.
\end{equation} 

From \eqref{def:sequence} and \eqref{movil3}, we have that
$$
\|q_{m,2}\|_{W^{\alpha,2}}=\|\phi q_{\theta_0}\|_{W^{\alpha,2}}.
$$
Using \eqref{rey_3} and \eqref{control_aproximacion_born} we get
\begin{equation*}
\label{l=2}
\|q_{m,2}\|_{W^{\alpha,2}}\le C_{3,4} \|q\|_{W^{\alpha , 2} }\le 2C_{3,4} \|q\|_{W^{\alpha , 2} },
\end{equation*}
for $\alpha$ and $q$ under the assumptions of Corollary \ref{Cborn}.

Arguing as before, from \eqref{def:sequence}, \eqref{movil3}, \eqref{rey_3} and \eqref{control_aproximacion_born} we also get
\begin{eqnarray*}
\|q_{m,\ell+1}\|_{W^{\alpha,2}}
&\le&
\|\phi q_{\theta_0}\|_{W^{\alpha,2}}
+
\left\| \phi\sum_{j=1}^m \mathcal{Q}_j(q_{m,\ell})  \right\|_{W^{\alpha,2}} 
\nonumber
\\
&\le&
C_{3,4}\| q\|_{W^{\alpha,2}}
+
C_4\sum_{j=1}^m\left\| \mathcal{Q}_j(q_{m,\ell})  \right\|_{W^{\alpha,2}}.
\end{eqnarray*}
From here, using identity \eqref{rey_1}, \eqref{estimacionPj} and the induction hypothesis \eqref{eq:2estimaVn}, we obtain
\begin{eqnarray*}
\|q_{m,\ell+1}\|_{W^{\alpha,2}}
&\le&
C_{3,4}\| q\|_{W^{\alpha,2}}
+
C_4\sum_{j=1}^mC_1^j\left\| q_{m,\ell}  \right\|^{j+1}_{W^{\alpha,2}}
\\
&\le&
C_{3,4}\| q\|_{W^{\alpha,2}}
 +
C_4 \sum_{j=1}^mC_1^j(2C_{3,4})^{j+1}\left\| q \right\|^{j+1}_{W^{\alpha,2}}.
\end{eqnarray*}
Estimate \eqref{eq:2estimaVn} follows from here since 
for $q$ satisfying \eqref{peque1} we have that
$$2C_4\sum_{j=1}^m(2C_{1,3,4}\| q\|_{W^{\alpha,2}})^{j}\le 1.$$

\underline{STEP 2}. 
From \eqref{def:sequence} and \eqref{movil3}, using \eqref{rey_3}, we have that
$$
\|q_{m,\ell+1}-q_{m,n+1}\|_{W^{\alpha,2}}
\le
C_4
\left\| \sum_{j=1}^m \left(\mathcal{Q}_j(q_{m,\ell})- \mathcal{Q}_j(q_{m,n})\right) \right\|_{W^{\alpha,2}}.
$$
From here, using \eqref{rey_1}, the fact that $\mathcal{P}_j$ is a multilinear operator and the triangular inequality, we get
\begin{equation}
\label{inter}
\|q_{m,\ell+1}-q_{m,n+1}\|_{W^{\alpha,2}}
\le
C_4
\sum_{j=1}^m\sum_{i=1}^{j+1}\left\| \mathcal{P}_j(\mathbf{f}_{m,\ell,n,i}) \right\|_{W^{\alpha,2}},
\end{equation}
where
$$
\mathbf{f}_{m,\ell,n,i}=(q_{m,\ell},\ldots q_{m,\ell},\underbrace{q_{m,\ell}-q_{m,n}}_{i-position},q_{m,n},\ldots,q_{m,n}).
$$
Using \eqref{estimacionPj} and \eqref{eq:2estimaVn} in \eqref{inter} we obtain
\begin{eqnarray}
\|q_{m,\ell+1}-q_{m,n+1}\|_{W^{\alpha,2}}
&\le&
C_4
\sum_{j=1}^m
(j+1)
\left(2C_{1,3,4}\left\|q\right\|_{W^{\alpha,2}}\right)^j
\left\| q_{m,\ell}-q_{m,n}\right\|_{W^{\alpha,2}}
\nonumber
\\
&\le&
B
\left\| q_{m,\ell}-q_{m,n}\right\|_{W^{\alpha,2}},
\label{cauchy}
\end{eqnarray}
with
$$
B=\frac{C_4}{(1-2C_{1,3,4}\left\|q\right\|_{W^{\alpha,2}})^2}.
$$
Observe that $B<1$  whenever
$\alpha$ satisfies \eqref{st} and $q\in W^{\alpha,2}(\mathbb{R}^d)$ is a real valued function with compact support in $B(0,R)$ satisfying \eqref{peque}, \eqref{peque1} and
\begin{equation}
\label{peque2}
\left\|q\right\|_{W^{\alpha,2}}<\frac{1+\sqrt{C_4}}{2C_{1,3,4}}.
\end{equation}
Therefore, for such a $q,$ we have that $\left\{ q_{m,\ell} \right\}_{\ell \in \mathbb{N}}$ is a Cauchy sequence.
As a consequence, there exits $q_m \in W^{\alpha, 2}(\mathbb{R}^d)$ such that
$$q_m= \lim_{\ell \rightarrow \infty}q_{m,\ell},\qquad\text{in }W^{\alpha, 2}(\mathbb{R}^d).$$
From \eqref{eq:2estimaVn}, we have that $q_m$ satisfies 
\begin{equation}\label{acotaVm}
\|q_m\|_{W^{\alpha,2}}\leq 2C_{3,4}\| q\|_{W^{\alpha,2}}.
\end{equation}
Moreover,
\begin{equation}\label{movil5}
q_m= \phi  q_{\theta_0}-\phi \sum_{j=1}^m \mathcal{Q}_j(q_m),
\end{equation}
since arguing as we did to get \eqref{cauchy}, we obtain
\begin{equation*}
\left\|q_{m,\ell+1}-\left(\phi  q_{\theta_0}-\phi \sum_{j=1}^m \mathcal{Q}_j(q_m)\right)\right\|_{W^{\alpha,2}}
<
\left\| q_{m,\ell}-q_m\right\|_{W^{\alpha,2}}.
\end{equation*}

\underline{STEP 3}. 
For $\alpha$ satisfying \eqref{st} and $q\in W^{\alpha,2}(\mathbb{R}^d)$ being a real valued function with compact support in $B(0,R)$ satisfying \eqref{peque}, \eqref{peque1} and \eqref{peque2},
from \eqref{movil5}, \eqref{eq:born_expansion} and \eqref{movil}, since $supp(q)\subset B(0,R),$ we have that
$$
\|q_m-q\|_{W^{\alpha,2}}
\le
\left\|\phi \sum_{j=1}^m \left(\mathcal{Q}_j(q_m)- \mathcal{Q}_j(q)\right) \right\|_{W^{\alpha,2}}
+
\| \phi q_m^r\|_{W^{\alpha,2}}.
$$
From here, arguing as we did to get \eqref{cauchy}, we obtain
\begin{equation}
\label{final}
\|q_m-q\|_{W^{\alpha,2}}
\le
D
\|q_m-q\|_{W^{\alpha,2}}
+
\| \phi q_m^r\|_{W^{\alpha,2}},
\end{equation}
where 
$$
D=
C_4
\sum_{j=1}^m
C_1^j
\sum_{i=1}^{j+1}
\| q_m\|_{W^{\alpha,2}}^{i-1}
\|q\|_{W^{\alpha,2}}^{j+1-i}.
$$
Using \eqref{acotaVm}, we have that
\begin{equation*}
\label{D}
D\le 
C_4
\sum_{j=1}^m
(C_1\|q\|_{W^{\alpha,2}})^j
\sum_{i=1}^{j+1}
(2C_{3,4})^{i-1},
\end{equation*}
and therefore
\begin{equation*}
D\le 
\left\{
\begin{array}{lll}
\displaystyle
C_4
\sum_{j=1}^m
(j+1)
(C_{1}\|q\|_{W^{\alpha,2}})^j,
&&\text{if } 2C_{1,3,4}\le1,
\\[2ex]
\displaystyle
C_4
\sum_{j=1}^m
(j+1)
(2C_{1,3,4}\|q\|_{W^{\alpha,2}})^j,
&&\text{if } 2C_{1,3,4}\ge1.
\end{array}
\right.
\end{equation*}
Thus,
$D\le 1/2$ 
whenever
\begin{equation}
\label{peque3}
\left\|q\right\|_{W^{\alpha,2}}<\min\left(\frac{1}{C_1},\frac{1}{2C_{1,3,4}}\right).
\end{equation}
Using this in \eqref{final} we get
\begin{equation*}
\|q_m-q\|_{W^{\alpha,2}}
\le
2
\| \phi q_m^r\|_{W^{\alpha,2}}.
\end{equation*}
Finally, using \eqref{rey_3} and \eqref{eq:estima_resto} we have that
\begin{equation}
\label{fin}
\|q_m-q\|_{W^{\alpha,2}}
\le
2C_4C_2^m\|q\|_{W^{\alpha,2}}^{m+1}.
\end{equation}
The result follows from here if $q$ satisfies  \eqref{peque}, \eqref{peque1}, \eqref{peque2}, \eqref{peque3}, and also
$
\|q\|_{W^{\alpha,2}}<1/C_2.
$
\hfill
$\Box$
\subsection{Proofs of  the key points}
In this subsection we give the proofs of Propositions \ref{proposicion_apendice} and \ref{resto}. They require of several known estimates for different operators in weighted Sobolev spaces.

We will need to know the behaviour of the outgoing resolvent of the laplacian 
denoted by $R_k$ (see \eqref{resolvente}) for $k$ small given in the following lemma.
\begin{lemma}\label{le:Rkpequeño}{\rm (\cite[Chapter VI]{KK},  \cite[lemma 21.4]{Ko}.).} Let $\alpha \in [0,2]$, $k\in[0,b],$ $b>0$ and $\delta>1$. Then
	\begin{equation}\label{zapatero_1}
	\|R_kf\|_{W^{\alpha,2}_{-\delta}} \leq C_{b,\delta}\|f\|_{L^2_\delta }.
	\end{equation}
\end{lemma}

For $\theta_0\in S^{d-1}$ and $k>0$ fixed, we introduce the following operator involving the outgoing resolvent of the laplacian,
\begin{equation}
\label{Rconjugado}
R_{k, \theta_0}f(x)=e^{-ik \theta_0 \cdot x}R_k(f(\cdot )e^{ik \theta_0 \cdot (\cdot)})(x).
\end{equation}
The following result can be obtained by interpolation of several estimates due to Agmon (see \cite{Ag}), Kenig, Ruiz and Sogge (see \cite{KRS}), Ruiz and Vega (see\cite{RV}), and Barcel\'o, Ruiz and Vega (see \cite{BRV}). For details we refer the reader to \cite{Ru1}.	
\begin{lemma} {\rm(\cite[Lemma 3.4]{Ru1})}\label{le:Rk_grande}. Let $\alpha\geq 0$, $r$ and $t$ such that $0 \leq \frac{1}{t}-\frac{1}{2} \leq \frac{1}{d+1}$ and $0 \leq \frac{1}{2}-\frac{1}{r} \leq \frac{1}{d+1}$, then there exist  $\delta>1$ and a constant $C_5:=C_5(\delta)$ such that
	\begin{equation}\label{lema1}
	\|R_{k, \theta_0}f \|_{W^{\alpha,r}_{-\delta}} \leq C_5 k^{-1+\frac{d-1}{2}\left(\frac{1}{t}-\frac{1}{r}  \right)}\|f \|_{W^{\alpha,t}_{\delta}} .
	\end{equation}
\end{lemma}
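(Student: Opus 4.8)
My plan is to strip away the two inessential features of the statement---the Sobolev order $\alpha$ and the conjugation by the plane wave---and reduce \eqref{lema1} to a classical weighted uniform resolvent estimate for $R_k$ itself. First I would compute the Fourier symbol of the operator $R_{k,\theta_0}$ in \eqref{Rconjugado}. Since multiplication by $e^{\pm ik\theta_0\cdot x}$ translates the frequency variable, the definition \eqref{resolvente} gives
\begin{equation*}
\widehat{R_{k,\theta_0}f}(\xi)=\frac{\widehat f(\xi)}{-|\xi|^2-2k\,\theta_0\cdot\xi+i0},
\end{equation*}
so that $R_{k,\theta_0}=M_{-k\theta_0}R_kM_{k\theta_0}$, where $M_a$ denotes modulation by $e^{ia\cdot x}$, and its singular set is the sphere $\{|\xi+k\theta_0|=k\}$ of radius $k$ (the Ewald sphere of Figure \ref{Ewald}).

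Two elementary observations then reduce the problem. On one hand, $R_{k,\theta_0}$ and $\Lambda^\alpha$ are both Fourier multipliers, hence commute; since $\|h\|_{W^{\alpha,p}_{\pm\delta}}=\|\Lambda^\alpha h\|_{L^p_{\pm\delta}}$, writing $g=\Lambda^\alpha f$ shows that the case $\alpha\ge 0$ follows verbatim from the case $\alpha=0$. On the other hand, modulation is an isometry of every weighted space $L^p_{\pm\delta}$, because $|e^{ia\cdot x}|=1$; consequently
\begin{equation*}
\|R_{k,\theta_0}g\|_{L^r_{-\delta}}=\|R_k(M_{k\theta_0}g)\|_{L^r_{-\delta}}\quad\text{and}\quad\|M_{k\theta_0}g\|_{L^t_\delta}=\|g\|_{L^t_\delta}.
\end{equation*}
Hence it suffices to prove the plain resolvent bound $\|R_kh\|_{L^r_{-\delta}}\le C_5\,k^{-1+\frac{d-1}{2}(\frac1t-\frac1r)}\|h\|_{L^t_\delta}$ for the admissible exponents, and the full statement follows.

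For this core estimate I would interpolate an analytic family of weighted operators between two endpoints. The diagonal endpoint $t=r=2$, where the power is $k^{-1}$, is Agmon's limiting absorption principle for $R_k$ in $L^2_{\pm\delta}$ with $\delta>1/2$, which furnishes the $k^{-1}$ decay at high frequency. The off-diagonal corner $\frac1t-\frac12=\frac12-\frac1r=\frac1{d+1}$ is a Stein--Tomas/Kenig--Ruiz--Sogge restriction estimate, in the weighted form of Ruiz--Vega and Barcel\'o--Ruiz--Vega, and is where harmonic analysis of the sphere genuinely enters. Complex interpolation of the weights then fills the entire admissible range $0\le\frac1t-\frac12\le\frac1{d+1}$, $0\le\frac12-\frac1r\le\frac1{d+1}$. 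The sharp power of $k$ I would pin down by scaling: the substitution $\xi=k\eta$ turns the radius-$k$ sphere into the unit sphere and exhibits $R_k$ as $k^{-2}$ times a fixed dilated operator, while the $(d-1)$-dimensional surface measure of the singular sphere contributes a factor $k^{d-1}$ through the restriction estimate; combined with the homogeneity of the weights these produce exactly the exponent $-1+\frac{d-1}{2}(\frac1t-\frac1r)$, the $\frac{d-1}{2}$ reflecting the codimension one of the sphere.

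I expect the weighted corner estimate to be the main obstacle. In the unweighted uniform Sobolev inequality the exponents are rigidly tied by $\frac1t-\frac1r=\frac2d$, forced by exact scale invariance; the decisive point here is that the weights $\langle x\rangle^{\pm\delta}$ decouple $t$ and $r$ and permit the two independent ranges above, and transferring the restriction theorem into this weighted, mixed-norm setting while keeping the constant $C_5$ uniform in $k$ (as the singular sphere recedes to infinity) is delicate. This uniform weighted estimate is precisely what one imports from \cite[Lemma 3.4]{Ru1} and, through it, from \cite{Ag,KRS,RV,BRV}; the interpolation and the reductions above are comparatively routine once it is in hand.
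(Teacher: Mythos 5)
Your proposal is correct and, at bottom, travels the same road as the paper: the paper does not prove Lemma \ref{le:Rk_grande} at all, it quotes \cite[Lemma 3.4]{Ru1} and notes that the estimate is obtained by interpolating results of Agmon \cite{Ag}, Kenig--Ruiz--Sogge \cite{KRS}, Ruiz--Vega \cite{RV} and Barcel\'o--Ruiz--Vega \cite{BRV}, which are exactly the four ingredients your interpolation scheme assembles and exactly the point at which you, too, defer to the literature. The genuine addition in your write-up is the preliminary reduction stripping off both $\alpha$ and the conjugation, and it is valid --- but only because of the convention adopted in this paper, $\|f\|_{W^{\alpha,p}_{\pm\delta}}=\|\langle x\rangle^{\pm\delta}\Lambda^{\alpha}f\|_{L^{p}}$, with the weight acting \emph{after} the derivative: then $\Lambda^{\alpha}$ commutes with the Fourier multiplier $R_{k,\theta_0}$ (your symbol computation is correct, the singular set being the Ewald sphere of Figure \ref{Ewald}), and modulations are isometries of $L^{p}_{\pm\delta}$, so \eqref{lema1} is equivalent to the weighted Lebesgue bound for $R_k$ itself. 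Had the weight been placed inside $\Lambda^{\alpha}$, this reduction would fail (derivatives falling on $e^{ik\theta_0\cdot x}$ cost powers of $k$, which is the historical reason the conjugated operator \eqref{Rconjugado} is used in this subject), so you should state explicitly that your argument leans on the paper's definition of the weighted Sobolev norm. Two points need tightening. First, interpolating between only your two named endpoints (Agmon at $t=r=2$ and the Stein--Tomas/KRS corner) sweeps out only the segment joining them in the $(1/t,1/r)$-square; to reach the full admissible range you must also use the mixed corners, i.e.\ the weighted $L^{t}\to L^{2}_{-\delta}$ and $L^{2}_{\delta}\to L^{r}$ estimates of \cite{RV} and \cite{BRV}, as interpolation endpoints in their own right --- you cite these papers, but as background rather than as corners. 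Second, pure scaling cannot pin down the power of $k$ once the inhomogeneous weights $\langle x\rangle^{\pm\delta}$ are present (scaling alone would predict $-2+d\left(\frac1t-\frac1r\right)$, which is wrong at $t=r=2$); the exponent $-1+\frac{d-1}{2}\left(\frac1t-\frac1r\right)$ is instead the affine interpolation between Agmon's $k^{-1}$ and the corner's $k^{-2/(d+1)}$, which your interpolation scheme produces automatically, so the scaling justification should be dropped rather than repaired.
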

We also introduce the restriction operator given by
\begin{equation}\label{def:restriction}
S_{k,\theta_0}f(\theta)=\int_{\mathbb{R}^d}e^{-ik(\theta-\theta_0) \cdot y}f(y) dy,\qquad \theta\in S^{d-1}.
\end{equation}
The following lemma is a consequence of Theorem 3(c) in \cite{BRV}. 
\begin{lemma}
	\label{LemaRayosX}
	If $\delta>1/2$ then, there exists $C>0$ such that 
	\begin{equation}\label{rayosX}
	\|S_{k,\theta_0}f \|_{L^2(S^{d-1})} \leq Ck^{-\frac{d-1}{2}}\|f\|_{L^2_{\delta}}.
	\end{equation}
\end{lemma}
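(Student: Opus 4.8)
The plan is to recognize $S_{k,\theta_0}f$ as the restriction to a sphere of radius $k$ of the Fourier transform of a modulated copy of $f$, and then to invoke the trace estimate of \cite{BRV} after elementary bookkeeping. First I would absorb the factor $e^{ik\theta_0\cdot y}$ into the function: setting $g(y)=e^{ik\theta_0\cdot y}f(y)$, the definition \eqref{def:restriction} becomes
$$
S_{k,\theta_0}f(\theta)=\int_{\R^d}e^{-ik\theta\cdot y}g(y)\,dy=\widehat g(k\theta),\qquad \theta\in S^{d-1}.
$$
Since $|e^{ik\theta_0\cdot y}|=1$, the modulation does not affect the weight $\langle y\rangle^{\delta}$, so that $\|g\|_{L^2_\delta}=\|f\|_{L^2_\delta}$. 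Thus the lemma is reduced to a statement purely about the restriction of $\widehat g$ to the sphere $\{|\xi|=k\}$, with $\theta_0$ playing no further role.

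Next I would invoke Theorem 3(c) of \cite{BRV}, which, for $\delta>1/2$, guarantees that the Fourier transform of an $L^2_\delta$ function has a well-defined $L^2$ trace on every sphere, together with a bound that is uniform in the radius, of the form
$$
\left(\int_{|\xi|=k}|\widehat g(\xi)|^2\,d\mu_k(\xi)\right)^{1/2}\leq C\,\|g\|_{L^2_\delta},
$$
where $d\mu_k$ denotes the surface measure on $\{|\xi|=k\}$. This uniform-in-$k$ trace estimate is exactly the analytic content of the lemma; the hypothesis $\delta>1/2$ is precisely what makes the trace meaningful.

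Finally I would pass from the surface measure on the sphere of radius $k$ to the one on the unit sphere. Writing $\xi=k\theta$ with $\theta\in S^{d-1}$ gives $d\mu_k(\xi)=k^{d-1}\,d\theta$, hence
$$
\int_{|\xi|=k}|\widehat g(\xi)|^2\,d\mu_k(\xi)=k^{d-1}\int_{S^{d-1}}|\widehat g(k\theta)|^2\,d\theta=k^{d-1}\,\|S_{k,\theta_0}f\|_{L^2(S^{d-1})}^2.
$$
Combining the last two displays and using $\|g\|_{L^2_\delta}=\|f\|_{L^2_\delta}$ yields $\|S_{k,\theta_0}f\|_{L^2(S^{d-1})}\le C k^{-(d-1)/2}\|f\|_{L^2_\delta}$, which is the claimed bound.

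The computation is entirely routine once the reformulation is in place, so the main (and essentially only) obstacle lies in the cited trace estimate of \cite{BRV}: the one point to verify carefully is that the version being quoted is stated uniformly in the radius $k$, since that uniformity is what produces the correct power $k^{-(d-1)/2}$ after the change of measure. For small $k$ the inequality is harmless (the left-hand side stays bounded while the right-hand side blows up), so the genuine content is the decay as $k\to\infty$.
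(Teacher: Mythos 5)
Your proof is correct and is essentially the paper's own: the paper offers no argument for this lemma beyond citing Theorem 3(c) of \cite{BRV}, and your reduction --- absorbing the modulation $e^{ik\theta_0\cdot y}$ (which leaves the $L^2_\delta$ norm invariant) and converting surface measure on $\{|\xi|=k\}$ into measure on $S^{d-1}$ via the Jacobian $k^{d-1}$ --- is exactly the bookkeeping that turns that trace theorem into \eqref{rayosX}. One caveat: your closing parenthetical, that for small $k$ the left-hand side ``stays bounded,'' is not justified, since for $1/2<\delta\le d/2$ a function $f\in L^2_\delta$ need not belong to $L^1(\mathbb{R}^d)$, and indeed $\|S_{k,\theta_0}f\|_{L^2(S^{d-1})}$ can blow up as $k\to 0$ (take $\widehat{f}(\xi)\sim|\xi|^{-a}$ near the origin with $0<a<d/2-\delta$). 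This does not damage the argument, because the trace estimate you quote from \cite{BRV} is uniform over all radii $k>0$; but the small-$k$ regime is genuinely needed rather than ``harmless,'' as the paper applies \eqref{rayosX} precisely for $k$ near zero when bounding the terms $I_1$ in the proofs of Propositions \ref{proposicion_apendice} and \ref{resto}.
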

Using the previous lemma and the Stein-Tomas restriction theorem(see \cite{SteinTomas}) we can get the following result which generalizes the previous one.
\begin{lemma}\label{le:Sk} {\rm(\cite[Lemma 3.7]{Ru1}).} Let $\alpha\geq 0$ and $t$ satisfying $0 \leq \frac{1}{t}-\frac{1}{2}\leq \frac{1}{d+1}$; then there exists $\delta (t) >0$ and $C:=C_t>0$ such that for $w(\theta)=|\theta-\theta_0|^\alpha$, we have 
	\begin{equation}\label{lema2}
	\|wS_{k,\theta_0}f \|_{L^2(S^{d-1})} \leq Ck^{\frac{d-1}{2}\left( \frac{1}{t}-\frac{3}{2}   \right)-\alpha}\|f\|_{W^{\alpha,t}_{\delta (t)}}.
	\end{equation}
\end{lemma}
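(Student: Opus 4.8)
The plan is to strip the weight $w(\theta)=|\theta-\theta_0|^{\alpha}$ and the $\alpha$ fractional derivatives off the restriction operator, reducing \eqref{lema2} to its $\alpha=0$ instance, and then to obtain that instance by interpolating Lemma \ref{LemaRayosX} against the Stein--Tomas theorem. First I would record an exact identity: since $S_{k,\theta_0}f(\theta)=\widehat f(k(\theta-\theta_0))$ and, writing $\xi=k(\theta-\theta_0)$, one has $|\theta-\theta_0|^{\alpha}=k^{-\alpha}|\xi|^{\alpha}$, the homogeneous fractional derivative $D^{\alpha}:=\mathcal{F}^{-1}(|\xi|^{\alpha}\mathcal{F}\,\cdot\,)$ satisfies
$$w(\theta)\,S_{k,\theta_0}f(\theta)=k^{-\alpha}\,|\xi|^{\alpha}\widehat f(\xi)=k^{-\alpha}\,S_{k,\theta_0}(D^{\alpha}f)(\theta).$$
This is exact for Schwartz $f$ and transfers the weight on the left into the factor $k^{-\alpha}$, which already accounts for the $-\alpha$ in the exponent of $k$ in \eqref{lema2}. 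It then suffices to (i) bound $\|D^{\alpha}f\|_{L^{t}_{\delta(t)}}$ by $\|f\|_{W^{\alpha,t}_{\delta(t)}}$ and (ii) prove the $\alpha=0$ estimate $\|S_{k,\theta_0}g\|_{L^{2}(S^{d-1})}\le Ck^{\frac{d-1}{2}(\frac1t-\frac32)}\|g\|_{L^{t}_{\delta(t)}}$.

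For (i) I would factor $D^{\alpha}=M\Lambda^{\alpha}$, where $M$ is the Fourier multiplier with symbol $m(\xi)=(|\xi|/\langle\xi\rangle)^{\alpha}$. This $m$ is bounded and, away from the origin, satisfies the Mikhlin--H\"ormander bounds $|\xi|^{|\beta|}|\partial^{\beta}m(\xi)|\le C_{\beta}$ (the lack of smoothness of $|\xi|^{\alpha}$ at $\xi=0$ is harmless, each derivative lowering the homogeneity by exactly one). Hence $M$ is a Calder\'on--Zygmund operator, bounded on $L^{t}(\langle x\rangle^{\delta t}\,dx)=L^{t}_{\delta}$ whenever $\langle x\rangle^{\delta t}$ is an $A_{t}$ weight, i.e. for $0\le\delta<d/t'$, so that $\|D^{\alpha}f\|_{L^{t}_{\delta(t)}}=\|M\Lambda^{\alpha}f\|_{L^{t}_{\delta(t)}}\le C\|\Lambda^{\alpha}f\|_{L^{t}_{\delta(t)}}=C\|f\|_{W^{\alpha,t}_{\delta(t)}}$ for admissible $\delta(t)$.

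For (ii) I would interpolate the two endpoints of the range $0\le\frac1t-\frac12\le\frac1{d+1}$. At $t=2$ Lemma \ref{LemaRayosX} gives $\|S_{k,\theta_0}g\|_{L^{2}(S^{d-1})}\le Ck^{-\frac{d-1}{2}}\|g\|_{L^{2}_{\delta}}$ with $\delta>1/2$, which is exactly the $\alpha=0$, $t=2$ case. At the Stein--Tomas endpoint $t=\frac{2(d+1)}{d+3}$, modulating by $e^{ik\theta_0\cdot(\cdot)}$ recentres the Ewald sphere at the origin without changing the $L^{t}$ norm, and rescaling the radius-$k$ sphere to the unit sphere turns the (unweighted) restriction theorem into $\|S_{k,\theta_0}g\|_{L^{2}(S^{d-1})}\le Ck^{d(\frac1t-1)}\|g\|_{L^{t}}$; a direct computation gives $d(\frac1t-1)=\frac{d-1}{2}(\frac1t-\frac32)$ at this $t$. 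Since the target $k$-exponent $\frac{d-1}{2}(\frac1t-\frac32)$ is affine in $1/t$ and both endpoints realise it, complex (Stein--Weiss) interpolation between $L^{2}_{\delta}$ and $L^{t}$ produces, for every intermediate $t$, the $\alpha=0$ estimate with weight $L^{t}_{\delta(t)}$, $\delta(t)=(1-\vartheta)\delta>0$, and the correct power of $k$.

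The main obstacle is step (ii): one has to check that the two rather different endpoint bounds --- the weighted $L^{2}$ restriction estimate of Lemma \ref{LemaRayosX} and the unweighted Stein--Tomas estimate --- carry exactly the same affine scaling $\frac{d-1}{2}(\frac1t-\frac32)$ in $k$, so that they can be glued by interpolation with $k$-dependent constants. Tracking these powers of $k$, in particular the way the weight $\delta>1/2$ shifts the $t=2$ scaling from $-d/2$ to $-\tfrac{d-1}{2}$, is the delicate point; step (i) is then routine, since the $A_{t}$ range $0\le\delta<d/t'$ for $\delta(t)$ is compatible with the weights $\delta(t)=(1-\vartheta)\delta$ produced by the interpolation.
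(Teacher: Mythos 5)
Your proof is correct and follows essentially the route the paper itself indicates: the paper does not prove this lemma but quotes it from \cite{Ru1}, remarking only that it follows from Lemma \ref{LemaRayosX} together with the Stein--Tomas restriction theorem, which is exactly your interpolation in step (ii), with the endpoint exponents $-\frac{d-1}{2}$ and $-\frac{d(d-1)}{2(d+1)}$ indeed lying on the affine law $\frac{d-1}{2}\left(\frac{1}{t}-\frac{3}{2}\right)$ as you checked. Your step (i), stripping the weight $|\theta-\theta_0|^{\alpha}$ via the exact identity $wS_{k,\theta_0}f=k^{-\alpha}S_{k,\theta_0}(D^{\alpha}f)$ and a Mikhlin/$A_t$ argument for the symbol $(|\xi|/\langle\xi\rangle)^{\alpha}$, is the standard reduction from general $\alpha\ge 0$ to $\alpha=0$ carried out in the cited reference, so the proposal is a faithful reconstruction of the omitted proof rather than a different method.
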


Now we state the following result concerning the product of functions in weighted Sobolev spaces due to Zolesio (see \cite{Z}). For a more general version and details we refer the reader to Theorem 1.4.4.2 (pp. 28) of \cite{Grisvard}.
See also Proposition D.3 (pp. 182) of \cite{TesisReyes} for functions compactly supported. 
\begin{lemma}\label{le:zolesioNecesario}
	Let $f$ be compactly supported and $\delta \in \mathbb{R}$.
	\begin{itemize}
		\item [(i)] For $\alpha$ satisfying
		\begin{equation}
		\label{restriccionesI1a}
		0\le \alpha\quad\text{and}\quad \alpha> \frac{d}{2}-2,
		\end{equation}
		or
		\begin{equation}
		\label{restriccionesI1b}
		0< \alpha\quad\text{and}\quad \alpha\ge \frac{d}{2}-2,
		\end{equation}
		we have that there exists a constant $C:=C(\supp (f))$ such that
		\begin{equation}\label{zolesioI1}
		\|fg\|_{L^2_\delta} \leq C\|f\|_{W^{\alpha,2}} \|g\|_{W^{2,2}_{-\delta}} .
		\end{equation}
		\item [(ii)] For $\alpha,t$ and $r$ satisfying
		\begin{equation*}
		1\le t < \min(2, r)\quad \text{   and   } \quad 0 \leq\frac{1}{2}+\frac{1}{r}-\frac{1}{t} \leq \frac{\alpha}{d},
		\end{equation*}	
		or
		\begin{equation*}
		1\le t \le \min(2, r)\quad \text{   and   } \quad 0 \leq\frac{1}{2}+\frac{1}{r}-\frac{1}{t} < \frac{\alpha}{d},
		\end{equation*}		
		we have that there exists a constant $C:=C(\supp (f))$ such that
		\begin{equation}\label{zolesioI2}
		\|fg\|_{W^{\alpha,t}_{\delta}} \leq C\|f\|_{W^{\alpha,2}} \|g\|_{W^{\alpha,r}_{-\delta}}.
		\end{equation}
		\item [(iii)] For $\alpha,\beta$ and $p$ satisfying
		\begin{equation*}
		\label{restriccionesZ3}
		0\le\beta\le\min(\alpha,2),\quad 1\le p<2\quad \text{   and   } \quad
		0\le d\left(1-\frac{1}{p}\right)\le \alpha+2-\beta,
		\end{equation*}
		we have that there exists a constant $C:=C(\supp (f))$ such that
		\begin{equation}\label{Z3}
		\|fg\|_{W^{\beta,p}} \leq C\|f\|_{W^{\alpha,2}} \|g\|_{W^{2,2}_{-\delta}}.
		\end{equation}
	\end{itemize}
\end{lemma}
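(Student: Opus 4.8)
The plan is to read all three inequalities as pointwise multiplication (multiplier) estimates for the compactly supported factor $f$, and to prove them by first stripping off the weights $\langle x\rangle^{\pm\delta}$ and then combining a fractional Leibniz (Kato--Ponce) decomposition with Sobolev embeddings. The exponent hypotheses in (i)--(iii) should turn out to be \emph{exactly} the embedding constraints, so the whole argument is driven by scaling bookkeeping once the weights are removed.

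\textbf{Reduction to the unweighted, local setting.} First I would fix a compact set $K\supset\supp(f)$ and a cut-off $\chi\in C_c^\infty$ with $\chi\equiv1$ on a neighbourhood of $K$, so that $fg=f\,(\chi g)$. On $K$ the weight $\langle x\rangle^{\pm\delta}$ and all of its derivatives are bounded above and below by positive constants; hence $\|fg\|_{W^{s,p}_\delta}\simeq\|fg\|_{W^{s,p}}$, and since $\langle x\rangle^{-\delta}\gtrsim1$ on $\supp(\chi)$ the global weighted norm of $g$ controls the local unweighted norm, $\|\chi g\|_{W^{s,r}}\lesssim\|g\|_{W^{s,r}_{-\delta}}$. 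The only care needed here is that commuting $\langle x\rangle^{\pm\delta}$ across $\Lambda^{s}$ produces lower-order terms, which the localisation absorbs. This reduces each of (i)--(iii) to an unweighted estimate in which \emph{both} factors are compactly supported.

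\textbf{Product estimates.} For (i) I would use H\"older, $\|f\,\chi g\|_{L^2}\le\|f\|_{L^{a}}\|\chi g\|_{L^{a'}}$ with $\tfrac1a+\tfrac1{a'}=\tfrac12$, together with $W^{\alpha,2}\hookrightarrow L^{a}$ and $W^{2,2}\hookrightarrow L^{a'}$; admissibility of such $a,a'$ forces $\alpha+2\ge d/2$, which is precisely \eqref{restriccionesI1a}--\eqref{restriccionesI1b}. For (ii) and (iii) I would apply the fractional Leibniz rule to $\Lambda^{\alpha}(f\,\chi g)$ (respectively $\Lambda^{\beta}$), splitting into one term with the derivative on $f$ and one with the derivative on $\chi g$. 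In (ii) this yields $\|\Lambda^\alpha(f\,\chi g)\|_{L^t}\lesssim\|\Lambda^\alpha f\|_{L^2}\|\chi g\|_{L^b}+\|f\|_{L^c}\|\Lambda^\alpha(\chi g)\|_{L^r}$ with $\tfrac1t=\tfrac12+\tfrac1b=\tfrac1c+\tfrac1r$; the embeddings $W^{\alpha,r}\hookrightarrow L^b$ and $W^{\alpha,2}\hookrightarrow L^c$ are available precisely when $0\le\tfrac12+\tfrac1r-\tfrac1t\le\tfrac{\alpha}{d}$, while $t<\min(2,r)$ keeps the H\"older exponents positive. Part (iii) is identical, with the scaling relation $d(1-\tfrac1p)\le\alpha+2-\beta$ playing the role of the embedding condition.

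\textbf{Main obstacle.} The delicate point is the endpoint behaviour encoded in the two variants of each hypothesis (a non-strict inequality paired with a strict one): these match the endpoints of the Sobolev embeddings $W^{s,p}\hookrightarrow L^{q}$, where $\tfrac1q=\tfrac1p-\tfrac sd$ is attained only for $p>1$. Here the compact support of both factors is what rescues the endpoints, since on a bounded set $L^{q_1}\hookrightarrow L^{q_2}$ whenever $q_1\ge q_2$, so one may always lower the target integrability by an arbitrarily small amount and stay in the open range. Beyond this, the two remaining technical steps are justifying the fractional Leibniz rule for non-integer $\alpha$ in the relevant $L^p$-scales and checking that the weight-removal commutators are genuinely lower order; everything else is the exponent bookkeeping above.
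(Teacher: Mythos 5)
The paper does not actually prove this lemma: it is quoted as a known multiplication theorem of Zolesio \cite{Z}, with Theorem 1.4.4.2 of \cite{Grisvard} and Proposition D.3 of \cite{TesisReyes} cited for the general version and for the compactly supported version, respectively. Your proposal is therefore a genuinely different route --- a self-contained proof by localization, fractional Leibniz (Kato--Ponce) and Sobolev embedding --- and its exponent bookkeeping is correct: in (i), H\"older together with $W^{\alpha,2}\hookrightarrow L^{a}$ and $W^{2,2}\hookrightarrow L^{a'}$ forces exactly $\alpha+2\ge d/2$; in (ii), both Leibniz terms reduce to the single condition $\tfrac12+\tfrac1r-\tfrac1t\le\tfrac{\alpha}{d}$, with $t<\min(2,r)$ keeping the H\"older exponents positive; in (iii), the same computation produces $d(1-\tfrac1p)\le\alpha+2-\beta$. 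What your route buys is transparency (one sees where each hypothesis comes from); what the paper's citation buys is precisely the technical content you defer, and two of your deferred points deserve a warning. First, the weight-stripping step is not justified by saying the weight is bounded on $K$: the weighted norm is $\|\langle x\rangle^{\pm\delta}\Lambda^{s}(\cdot)\|_{L^{p}}$ and $\Lambda^{s}$ is nonlocal, so $\Lambda^{s}(fg)$ is \emph{not} supported in $K$ even though $fg$ is; one genuinely needs the exponential off-diagonal decay of Bessel kernels, and a weighted bound for the commutator operator $[\Lambda^{s},\chi]\Lambda^{-s}$, to get $\|fg\|_{W^{s,p}_{\delta}}\lesssim\|fg\|_{W^{s,p}}$ and $\|\chi g\|_{W^{s,r}}\lesssim\|g\|_{W^{s,r}_{-\delta}}$. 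Second, the hypotheses allow the target exponents $t=1$ and $p=1$, which lie outside the classical Kato--Ponce range, so you must invoke the Grafakos--Oh endpoint version or run a paraproduct argument instead; this is much heavier machinery than the elementary estimates behind Zolesio's original theorem. Finally, your endpoint discussion is slightly misdirected: when the non-strict inequality is attained one cannot ``lower the integrability by an arbitrarily small amount'' --- there is no room left --- but none is needed, because the strict inequality paired with it in each variant of the hypotheses guarantees that the critical embedding required has a \emph{finite} target exponent, and the critical Sobolev embedding with finite exponent holds as stated; the excluded double-endpoint cases are exactly those that would require an embedding into $L^{\infty}$, which fails. None of these gaps is fatal, but they are exactly what the reference to Zolesio/Grisvard is carrying for the paper.
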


\emph{Proof of Proposition \ref{proposicion_apendice}.}
From \eqref{descomposicion}, for a fixed $\theta_0\in S^{d-1}$ 
we have that
\begin{equation*}
\left\|  \mathcal{P}_j(\mathbf{f}) \right\|_{W^{\alpha ,2}}^2
=
\left(
\int_{H_{\theta_0}}
+
\int_{H_{-\theta_0}}
\right)
\left\langle\xi\right\rangle^{2\alpha} \left|\widehat{\mathcal{P}_j(\mathbf{f})}(\xi)\right|^2 d \xi
=
I+II.
\end{equation*}
We will prove the estimate for term $I,$ the other one is obtained in a similar way.

Observe that in $H_{\theta_0}$ we can make the following change of variables
\begin{equation}
\label{cambio}
\xi=k(\theta - \theta_0), \hspace{0.5cm} d \xi= k^{d-1}|\theta - \theta_0|^2d\sigma(\theta) dk.
\end{equation}
Then,
\begin{eqnarray}
I
&\le&
C\int_0^1k^{d-1}\int_{S^{d-1}}\left| \widehat{\mathcal{P}_j(\mathbf{f})}(k(\theta-\theta_0))\right|^2d\sigma (\theta)dk
\nonumber
\\
&&
+C\int_1^\infty k^{d-1+2\alpha}\int_{S^{d-1}}|\theta-\theta_0|^{2\alpha}\left|\widehat{\mathcal{P}_j(\bf{f})}(k(\theta-\theta_0))\right|^2d\sigma (\theta)dk
\nonumber
\\
&=&
C(I_1+I_2),
\label{I}
\end{eqnarray}
whenever $0\le\alpha\le 1.$

In order to control $I_1,$ we rewrite $\mathcal{P}_j$ in terms of the resolvent operator $R_k$ and the restriction operator $S_{k,\theta_0}$ defined in \eqref{def:restriction}, as follows:
\begin{equation*}\label{eq:PviaSR}
\widehat{\mathcal{P}_j\left(\textbf{f} \right)}(k(\theta-\theta_0))=S_{k,\theta_0}\left(e^{-ik\theta_0 \cdot (\cdot)} f_{j+1}R_k \cdot \cdot \cdot f_2R_k \left( f_1e^{ik\theta_0 \cdot (\cdot)}  \right)    \right) (\theta).
\end{equation*}
Using this identity we have that
$$I_1 = \int_0^1 k^{d-1 }\left\|  S_{k,\theta_0}\left(e^{-ik \theta_0 \cdot (\cdot)} f_{j+1}R_k \cdot \cdot \cdot f_2R_k \left( f_1e^{ik\theta_0 \cdot (\cdot)}  \right)    \right)    \right\|^2_{L^2(S^{d-1})}dk.$$
We can bound the $L^2(S^{d-1})$-norm that appears in the last identity using \eqref{rayosX},  \eqref{zolesioI1} and \eqref{zapatero_1} as the following diagram illustrates
\begin{equation*}
\begin{CD}
L^2_\delta
\ldots
@>
{j-{\texttt{\rm{times}}}}
>>
L^2_\delta
@>
R_k
>>
W_{-\delta}^{2,2}
@>
{\texttt{\rm{Zolesio}}}
>>
L^2_\delta
@>
S_{k,\theta_0}
>>
L^2(S^{d-1}).
\end{CD}
\end{equation*}
Therefore, if  $\delta>1$ and $\alpha$ satisfies \eqref{restriccionesI1a} or \eqref{restriccionesI1b}
we get
\begin{eqnarray}
I_1 &\le& C^{2j }\|f_{j+1}\|^2_{W^{\alpha, 2}}    \|f_{j}\|^2_{W^{\alpha , 2}} \cdot \cdot \cdot  
\left\| f_{2} \right\|^2_{W^{\alpha, 2}}\left\| f_{1} \right\|^2_{L^{ 2}_\delta}
\nonumber
\\
&\le&  C^{2j} \prod_{\ell =1}^{j+1}   \|f_{\ell}\|^2_{W^{\alpha, 2}}.
\label{I1}
\end{eqnarray}

In order to control $I_2$ we need to take more advantage of oscillations, so we write $\mathcal{P}_j$ in terms of the operator $R_{k,\theta}$ introduced in \eqref{Rconjugado} and the restriction operator $S_{k,\theta_0}$ defined in \eqref{def:restriction}, as follows:
\begin{equation*}\label{eq:PviaRconjugado}
\widehat{\mathcal{P}_j\left(\textbf{f} \right)}(k(\theta-\theta_0))
=S_{k,\theta_0}\left(f_{j+1}R_{k,\theta_0} \cdot \cdot \cdot f_2R_{k,\theta_0} f_1  \right) (\theta).
\end{equation*}
Using this identity, and writing $w=|\theta-\theta_0|^\alpha,$ we have that
\begin{equation*}\label{integral_4}
I_2 = \int_1^\infty k^{d-1+2 \alpha}\left\| w S_{k,\theta_0}\left(f_{j+1}R_{k,\theta_0} \cdot \cdot \cdot f_2R_{k,\theta_0} f_1  \right)    \right\|^2_{L^2(S^{d-1})}dk,
\end{equation*}
In this case, we can bound the $L^2(S^{d-1})$-norm that appears in the last identity using \eqref{lema2}, \eqref{zolesioI2} and \eqref{lema1} as the following diagram illustrates
\begin{equation*}
\begin{CD}
W^{\alpha, 2}_\delta
\ldots
@>
{(j-1)-{\texttt{\rm{times}}}}
>>
W^{\alpha, t_j}_{\delta}
@>
R_{k,\theta_0}
>>
W^{\alpha,r_j}_{-\delta}
@>
{\texttt{\rm{Zolesio}}}
>>
W^{\alpha,t_{j+1}}_{\delta} 
@>
S_{k,\theta_0}
>>
L^2(S^{d-1}),
\end{CD}
\end{equation*}
whenever  there exist $r_{\ell}$ and $t_{\ell+1}$ satisfying for $\ell=1,2\ldots j$
\begin{equation}\label{monedero_3}
0 \leq \frac{1}{t_{\ell+1}}- \frac{1}{2} \leq \frac{1}{d+1}\quad\text{  and  } \quad  0 \leq \frac{1}{2}- \frac{1}{r_\ell} \leq \frac{1}{d+1}, 
\end{equation}
\begin{equation}\label{monedero_4}
t_{\ell+1} < \min(2, r_\ell)\quad \text{   and   } \quad 0 \leq\frac{1}{2}+\frac{1}{r_\ell}-\frac{1}{t_{\ell+1}} \leq \frac{\alpha}{d}.
\end{equation}  
Therefore, writing $t_1=2,$ we have
\begin{eqnarray*}
	\displaystyle
	I_2 &\le& C^{2j}\prod_{\ell=1}^{j+1} \|f_{\ell}\|^2_{W^{\alpha,2}}\int_1^\infty k^{(d-1)\left(\frac{1}{t_{j+1}}-\frac{1}{2}\right)+\sum_{\ell=1}^{j}\left(-2+(d-1)\left(\frac{1}{t_{\ell}}-\frac{1}{r_{\ell}}\right)\right)}dk
	\\
	&=&
	C^{2j}\prod_{\ell=1}^{j+1} \|f_{\ell}\|^2_{W^{\alpha,2}}\int_1^\infty
	\frac{dk}{k^{(d-1)\sum_{\ell=1}^{j}\left(\frac{1}{r_{\ell}}-\frac{1}{t_{\ell+1}}\right)+2j}}.
\end{eqnarray*}
Since $t_{\ell+1}$ and $r_{\ell}$ have to satisfy \eqref{monedero_4}, the best choice to get convergence of the previous integral is
\begin{equation}
\label{eleccion}
\frac{1}{r_{\ell}}-\frac{1}{t_{\ell+1}}=\frac{\alpha}{d}-\frac{1}{2}.
\end{equation}
With this choice we get
\begin{eqnarray}
\displaystyle
I_2 &\le& 
C^{2j}\prod_{\ell=1}^{j+1} \|f_{\ell}\|^2_{W^{\alpha,2}}\int_1^\infty
\frac{dk}{k^{(d-1)\left(\frac{\alpha}{d}-\frac{1}{2}\right)j+2j}}
\nonumber
\\
&\le&
C^{2j}\prod_{\ell=1}^{j+1} \|f_{\ell}\|^2_{W^{\alpha,2}},
\label{I2}
\end{eqnarray}
for any $j\in\mathbb{N}$ fixed, whenever 
\begin{equation}
\label{restriccion}
\alpha>d\left(\frac{1}{2}-\frac{1}{d-1}\right),
\end{equation}
and there exist $r_{\ell}$ and $t_{\ell+1}$ satisfying \eqref{monedero_3}, \eqref{monedero_4} and \eqref{eleccion}, for $\ell=1,2\ldots j$. Such $r_{\ell}$ and $t_{\ell+1}$ exist if
\begin{equation}
\label{restriccionI2}
0\le \alpha<d/2.
\end{equation}

Finally, the result follows from \eqref{I}, \eqref{I1} and \eqref{I2} if $\alpha\le 1$ and satisfies \eqref{restriccion}, \eqref{restriccionI2} and, \eqref{restriccionesI1a} or \eqref{restriccionesI1b}, that is, if $\alpha$ satisfies \eqref{st}.
\hfill
$\Box$

In order to prove Proposition \ref{resto}, we need the existence and uniqueness of solution of the direct scattering problem in $\in W_{-\delta}^{2,2}(\mathbb{R}^d)$ for any fixed wave number $k>0.$
This is equivalent to prove that there exists a unique $u_s,$ satisfying \eqref{Lippmann}.
For completeness we present here that result. The following lemma will be needed (see Theorem 6.5 in \cite{R2} or \cite{KK}).
\begin{lemma}
	\label{AlbertoT6.5}
	Let $k>0.$ There exists $\delta>1$ such that 
	\begin{equation}
	\label{Alberto}
	\|R_kf\|_{W_{-\delta}^{2,2}}\le c_k\|f\|_{L_{\delta}^2}.
	\end{equation}
\end{lemma}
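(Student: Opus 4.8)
The plan is to recognize \eqref{Alberto} as the classical limiting absorption principle for the Laplacian and to reduce it, by a localization on the Fourier side, to Agmon's weighted resolvent estimate near the characteristic sphere $\{|\xi|=k\}$. I would first move to the frequency side: since $R_k$ has multiplier $(-|\xi|^2+k^2+i0)^{-1}$, the operator $\Lambda^2 R_k$ has symbol $\langle\xi\rangle^2(-|\xi|^2+k^2+i0)^{-1}$, and the $W^{2,2}_{-\delta}$ norm of $R_k f$ is comparable to $\|\langle x\rangle^{-\delta}\Lambda^2 R_k f\|_{L^2}$. Hence it suffices to bound $\langle x\rangle^{-\delta}\Lambda^2 R_k f$ in $L^2$ by $\|f\|_{L^2_\delta}$.

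Next I would split the symbol with a smooth cut-off $\chi$ equal to $1$ on a neighbourhood of $\{|\xi|=k\}$ and supported in a slightly larger shell. On the support of $1-\chi$ the symbol $\langle\xi\rangle^2(-|\xi|^2+k^2+i0)^{-1}$ is smooth and uniformly bounded (it tends to $-1$ as $|\xi|\to\infty$ and is regular away from $|\xi|=k$), so by Plancherel this piece is bounded on $L^2$ and therefore maps $L^2_\delta$ into $L^2_{-\delta}$ with a constant $c_k$ depending on $k$. All the difficulty is thus carried by the piece localized by $\chi$, where $\langle\xi\rangle^2$ is comparable to the constant $\langle k\rangle^2$ and the matter reduces to the scalar resolvent $g\mapsto\mathcal{F}^{-1}[(-|\xi|^2+k^2+i0)^{-1}\widehat g\,]$.

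For this scalar resolvent I would invoke Agmon's estimate (see \cite{Ag}). Writing $(-|\xi|^2+k^2+i0)^{-1}$ via the Sokhotski--Plemelj formula as a principal value plus a constant multiple of the surface measure on $\{|\xi|=k\}$, the singular part is essentially the restriction of $\widehat g$ to that sphere, which is controlled in $L^2$ by the Agmon--H\"ormander trace estimate precisely when the weight exponent exceeds $1/2$; the principal-value part satisfies the same weighted bound. Integrating over the radial variable and recombining with the regular piece yields \eqref{Alberto} for $\delta>1$.

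The main obstacle is exactly the non-integrable singularity of the multiplier on the characteristic sphere $\{|\xi|=k\}$: this is what forces the weights $\langle x\rangle^{\pm\delta}$, what the limiting absorption principle is designed to overcome, and why the natural threshold on $\delta$ appears. As this is a classical statement, the cleanest route in the paper is simply to cite it from \cite{R2} or \cite{KK}; the argument above serves only to explain why the stated hypotheses on $k$ and $\delta$ are the right ones.
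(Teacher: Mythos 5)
Your proposal is correct, and its bottom line coincides with what the paper actually does: the paper offers no proof of Lemma \ref{AlbertoT6.5} at all, it simply cites it as a known form of the limiting absorption principle (Theorem 6.5 in \cite{R2}, or \cite{KK}), which is exactly the route you recommend in your final sentence. The sketch you add on top of that is the standard Agmon argument and is sound: split off the piece of the multiplier supported away from the characteristic sphere, where $\langle\xi\rangle^2(-|\xi|^2+k^2+i0)^{-1}$ is bounded, so Plancherel together with the embeddings $L^2_\delta\hookrightarrow L^2\hookrightarrow L^2_{-\delta}$ (valid since $\delta>0$) handles it, and treat the piece localized near $\{|\xi|=k\}$ by Sokhotski--Plemelj plus the Agmon--H\"ormander trace estimate, which is precisely how the cited references argue. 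One detail worth tightening: near the sphere you cannot simply ``replace'' $\langle\xi\rangle^2$ by the constant $\langle k\rangle^2$ on the grounds of pointwise comparability of symbols, because pointwise bounds on a multiplier do not by themselves transfer to operator bounds between \emph{weighted} $L^2$ spaces. The clean fix is to write the localized piece as the composition $\chi(D)\langle D\rangle^2\circ R_k$ (the two Fourier multipliers commute), observe that $\chi(D)\langle D\rangle^2$ is convolution with a Schwartz function and hence bounded on $L^2_{-\delta}$ by Peetre's inequality, and then invoke Agmon's scalar bound $R_k:L^2_\delta\to L^2_{-\delta}$, valid for every $\delta>1/2$. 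With that adjustment your argument is complete and in fact yields the estimate for all $\delta>1/2$, which is more than the lemma's claim that some $\delta>1$ works.
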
 
\begin{theorem}
	\label{tpds}
	For  $\alpha$ satisfying $\alpha>0$ and $\alpha>\frac{d}{2}-2$,
	let $q\in W^{\alpha,2}(\mathbb{R}^d)$ be real valued and compactly supported. 
	For any given $k>0$ and $\theta_0\in S^{d-1},$ there exists a unique solution $u_s=u_s(x,\theta_0,k)$ of equation \eqref{Lippmann} such that $u_s\in W_{-\delta}^{2,2}(\mathbb{R}^d)$ for certain $\delta>1$.
\end{theorem}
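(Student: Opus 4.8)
The plan is to recast the Lippmann--Schwinger equation \eqref{Lippmann} as a linear operator equation on $W^{2,2}_{-\delta}(\mathbb{R}^d)$ and solve it by the Fredholm alternative. Writing $T_k u:=R_k(qu)$ and $g_k:=R_k(q\,e^{ik\theta_0\cdot(\cdot)})$, equation \eqref{Lippmann} becomes $(I-T_k)u_s=g_k$. Since $q$ is not assumed small, a Neumann/contraction argument is unavailable, so the natural route is to establish that $T_k$ is compact and then reduce existence to uniqueness via the Fredholm alternative.

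First I would verify that $g_k\in W^{2,2}_{-\delta}$ and that $T_k$ is bounded on $W^{2,2}_{-\delta}$. Both follow by composing Zolesio's product estimate \eqref{zolesioI1} of Lemma \ref{le:zolesioNecesario} with the resolvent bound \eqref{Alberto} of Lemma \ref{AlbertoT6.5}; note that \eqref{zolesioI1} applies precisely because $q$ is compactly supported and the hypothesis $\alpha>0$, $\alpha>d/2-2$ is exactly condition \eqref{restriccionesI1a}. Schematically $W^{2,2}_{-\delta}\xrightarrow{\,\cdot\,q\,}L^2_\delta\xrightarrow{\,R_k\,}W^{2,2}_{-\delta}$, and since $q\,e^{ik\theta_0\cdot(\cdot)}\in L^2_\delta$ one also gets $g_k\in W^{2,2}_{-\delta}$. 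To upgrade $T_k$ to a compact operator I would exploit the compact support of $q$: for a bounded sequence $\{u_n\}\subset W^{2,2}_{-\delta}$, the functions $qu_n$ are bounded in $L^2$ and supported in the fixed ball $B(0,R)$, so $R_k(qu_n)$ is bounded in $W^{2,2}_{-\delta}$ and hence in $W^{2,2}$ on each ball; the Rellich--Kondrachov compact embedding yields a locally convergent subsequence, while the weight $\langle x\rangle^{-\delta}$ with $\delta>1$ controls the tails uniformly, giving convergence in $W^{2,2}_{-\delta}$.

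By the Fredholm alternative, solvability of $(I-T_k)u_s=g_k$ then reduces to injectivity of $I-T_k$, i.e.\ to uniqueness. Suppose $u\in W^{2,2}_{-\delta}$ solves $u=R_k(qu)$. By the definition \eqref{resolvente} of the outgoing resolvent, $u$ is a solution of the Helmholtz equation $(\Delta+k^2)u=qu$ satisfying the Sommerfeld radiation condition \eqref{Sommerfeld}. Outside $B(0,R)$ the potential vanishes, so there $u$ solves the homogeneous Helmholtz equation; using that $q$ is real valued, an energy identity on large balls together with the radiation condition \eqref{Sommerfeld} (Rellich's lemma) forces $u$ to vanish in the unbounded component of $\mathbb{R}^d\setminus B(0,R)$, and a unique continuation argument then propagates $u\equiv 0$ throughout $\mathbb{R}^d$.

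I expect the two nontrivial steps to be the compactness of $T_k$ and the unique continuation step. Compactness must be argued with care because the underlying Rellich embedding is only \emph{local}, so the weight is genuinely needed to handle the behaviour at infinity; and the Rellich/unique continuation argument must be justified for a merely $W^{\alpha,2}$ potential rather than a smooth one. This is precisely where the threshold $\alpha>d/2-2$ reappears, since it is what guarantees (via \eqref{zolesioI1}) that $qu$ lands in an admissible class for which the elliptic regularity and unique continuation machinery applies.
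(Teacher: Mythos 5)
Your overall route is the same as the paper's: write \eqref{Lippmann} as $(I-T_k)u_s=g_k$ with $T_ku=R_k(qu)$, prove injectivity of $I-T_k$ through the energy identity, the Sommerfeld condition, Rellich's lemma and unique continuation, and conclude existence by the Fredholm alternative; your uniqueness step matches the paper's almost verbatim and is correct. The genuine gap is in your compactness argument for $T_k$. You multiply first, obtaining $qu_n$ bounded in $L^2(B(0,R))$ via \eqref{zolesioI1}, apply $R_k$ to get $R_k(qu_n)$ bounded in $W^{2,2}_{-\delta}$, and then invoke Rellich--Kondrachov locally. But the local norm of $W^{2,2}_{-\delta}$ is the full $W^{2,2}$ norm, and Rellich--Kondrachov only embeds $W^{2,2}(B)$ compactly into strictly weaker spaces such as $W^{s,2}(B)$ with $s<2$, or $L^p(B)$: a bounded sequence in $W^{2,2}(B)$ has in general no subsequence converging in $W^{2,2}(B)$. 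The failure is structural, not technical: if the set $\{ (R_kf)\big|_{B_R} : \|f\|_{L^2(B_R)}\le 1 \}$ were precompact in $W^{2,2}(B_R)$, then its image under the bounded map $\Delta+k^2$, which by \eqref{resolvente} is the whole unit ball of $L^2(B_R)$, would be precompact in $L^2(B_R)$ --- impossible in infinite dimensions. Hence no weight-based tail control can repair the argument: once you have multiplied down into $L^2$ and applied $R_k$, compactness is irrecoverably lost.

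The repair is to place the compactness in the multiplication step, before the resolvent acts, which is exactly what the paper does. By part (iii) of Lemma \ref{le:zolesioNecesario} (estimate \eqref{Z3}), multiplication by $q$ maps $W^{2,2}_{-\delta}$ boundedly into $W^{\beta,p}$ for suitable $\beta>0$ and $1\le p<2$, and the products are supported in a fixed ball $B_M\supset\supp q$; the Sobolev embedding \eqref{compacidad2} of $W^{\beta,p}(B_M)$ into $L^2(B_M)$ is then genuinely compact, so $u\mapsto qu$ is a compact map from $W^{2,2}_{-\delta}$ to $L^2(B_M)$. The resolvent bound \eqref{Alberto}, in the form \eqref{compacidad1}, then converts an $L^2(B_M)$-Cauchy sequence $qu_{n_j}$ into a $W^{2,2}_{-\delta}$-Cauchy sequence $T_ku_{n_j}$, proving that $T_k$ is compact. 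Note that this is also where the hypotheses $\alpha>0$ and $\alpha>\frac{d}{2}-2$ are actually used in the existence part --- they are what allow \eqref{Z3} and \eqref{compacidad2} to be chained --- rather than in the unique continuation step via \eqref{zolesioI1}, as your closing paragraph suggests.
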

\begin{proof}
	We begin by proving the uniqueness. In order to do that, we have to prove that if $u$ is a solution in $W_{-\delta}^{2,2}(\mathbb{R}^d)$ of $u(x)=R_k(qu)(x)$ with $x\in\mathbb{R}^d$, then $u(x)=0$ for all $x\in\mathbb{R}^d.$ 
	
	We have that, in a weak sense
	\begin{equation}
	\label{homogenea}
	\left(\Delta  + k^2\right) u(x)=q(x)u(x), \qquad x\in\mathbb{R}^d,
	\end{equation}
	and $u$ satisfies the outgoing Sommerfeld condition.
	
	Consider $M>0$ large enough such that $supp\ q\subset B_M=B(0,M).$ 
	Multiplying (\ref{homogenea}) by $\overline{u}$ and integrating by parts we get
	\begin{equation*}
	\label{partes}
	-\int_{B_M}|\nabla u|^2
	+\int_{S_M}\partial_r u\ \overline{u}
	+k^2\int_{B_M}|u|^2
	=
	\int_{B_M}q|u|^2,
	\end{equation*}
	where $S_M=\partial B_M.$
	
	From here,
	since $q$ is real we have that
	\begin{equation}
	\label{Rellich}
	\Im \int_{S_M}\partial_r u\ \overline{u}=0.
	\end{equation}
	Writing the outgoing Sommerfeld condition given in (\ref{Sommerfeld}) in the following equivalent form
	\begin{equation*}
	\label{SommerEquivalente}
	\lim_{M\rightarrow\infty}\int_{S_M}\left|\partial_ru-iku\right|^2=0,
	\end{equation*}
	and using (\ref{Rellich}) we get
	$$
	\lim_{M\rightarrow\infty}\int_{S_M}|u|^2=0.
	$$
	From here, since $u$ is a radiating solution of equation $\left(\Delta  + k^2\right) u=0$ in the exterior domain $\mathbb{R}^d/B_M,$ we conclude using Rellich's lemma (see Lemma 2.11 in page 32 of \cite{ColtonKress}) that $u=0$ out of $B_M,$
	and by a unique continuation argument (see \cite{JerisonKenig}), $u(x)=0$ for all $x\in \mathbb{R}^d.$ 
		
	To prove existence of solution we will use Fredholm alternative.
	We introduce the operator $T_k$ defined by
	$T_ku=R_k(qu).$ 
	From \eqref{Alberto}, since $supp\ q\subset B_M,$
	we have that there exists $\delta>1$ such that 
	\begin{equation}
	\label{compacidad1}
	\|T_ku\|_{W_{-\delta}^{2,2}}\le C_{k,M}	\|qu\|_{L^2(B_M)}.
	\end{equation}
	On the other hand, from Sobolev embedding theorem we have that
	\begin{equation}
	\label{compacidad2}
	{W^{\beta,p}}(B_M)\underset{\text{compact}}{\hookrightarrow}L^2(B_M),
	\end{equation}
	whenever
	\begin{equation*}
	\beta\ge 0,\quad p>1,\quad \beta p<d,\quad 2<\frac{dp}{d-\beta p}.
	\end{equation*}
	Using \eqref{compacidad1}, \eqref{compacidad2} and \eqref{Z3}, we have that
	$T_k:W_{-\delta}^{2,2}\longrightarrow W_{-\delta}^{2,2}$ is a linear compact operator
	whenever $\alpha>0$ and $\alpha>\frac{d}{2}-2$. Thus, Fredholm alternative ensures the existence of solution of 
	$(I-T_k)u=R_k(q e^{ik\theta_0\cdot(\cdot)}),$ 
	since
	we have proved that any solution of 
	$(I-T_k)u=0$ in $\mathbb{R}^d$ satisfying the outgoing Sommerfeld radiation conditions must be the trivial solution,
	and $R_k(q e^{ik\theta_0\cdot(\cdot)})\in W_{-\delta}^{2,2}(\mathbb{R}^d).$
	\end{proof}
We will also need to know the behaviour of the scattered solution $u_s(x,\theta_0,k)$ when $k$ tends to zero (zero energy case). The following lemma, which can be found in \cite{Ru1}, collects several results given in  \cite{J1,J2,JK,M}.
\begin{lemma}{\rm{(\cite[Proposition 2.1]{Ru1})}}
	Let $q\in L^2(\mathbb{R}^d) \cap L^r(\mathbb{R}^d)$ for some $r>d/2,$ be real valued and compactly supported, and let $u_s(x, \theta_0, k)$ be the solution of \eqref{Lippmann}. For $\delta >1$ and , we have that
\begin{enumerate}
\item If $d \geq 3$,
\begin{equation}\label{murata_1}
\|u_s\|_{W^{2,2}_{-\delta} }=O(k^{-1}), \hspace{0.3cm} k \longrightarrow 0.
\end{equation}
\item If $d=2$,
\begin{equation}\label{murata_2}
\|u_s\|_{W^{2,2}_{-\delta} }=O((k \log k)^{-1}), \hspace{0.3cm} k \longrightarrow 0.
\end{equation}
\end{enumerate}
\end{lemma}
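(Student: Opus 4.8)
The plan is to read off the $k\to 0$ behaviour of $u_s$ from the Lippmann--Schwinger equation \eqref{Lippmann} together with the threshold (low-energy) expansion of the free resolvent $R_k$. Writing \eqref{Lippmann} as $(I-R_kq)u_s=R_k(q\,e^{ik\theta_0\cdot(\cdot)})$, for each fixed $k>0$ the operator $I-R_kq$ is invertible on $W^{2,2}_{-\delta}(\R^d)$ by Theorem \ref{tpds}, so that
$$
u_s=(I-R_kq)^{-1}R_k\!\left(q\,e^{ik\theta_0\cdot(\cdot)}\right).
$$
The question is thereby reduced to controlling, as $k\to 0$, the source $R_k(q\,e^{ik\theta_0\cdot(\cdot)})$ in $W^{2,2}_{-\delta}$ and the operator norm of $(I-R_kq)^{-1}$ on that space. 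Since $q$ is compactly supported, $q\,e^{ik\theta_0\cdot(\cdot)}$ remains bounded in $L^2_\delta$ for every $\delta$ and converges to $q$, so all of the difficulty is concentrated in the inverse.

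The first ingredient is the asymptotic expansion of $R_k$ near $k=0$, as an operator $L^2_\delta\to W^{2,2}_{-\delta}$ with $\delta>1$, as developed by Jensen \cite{J1,J2} and Murata \cite{M}. For $d\ge 3$ the resolvent extends continuously to $k=0$ with limit $R_0=\Delta^{-1}$ plus explicit finite-rank corrections (integer powers of $k$, with additional logarithmic terms in even dimensions), in accordance with the uniform bound \eqref{zapatero_1}; for $d=2$ the small-argument expansion of the Hankel kernel produces a leading logarithmic term of the form $(\log k)\,\langle\,\cdot\,,1\rangle\,1$. These expansions directly control the source term, which is $O(1)$ for $d\ge 3$ and grows logarithmically for $d=2$.

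The crux, and the main obstacle, is to invert $I-R_kq$ uniformly down to $k=0$. If $0$ is a regular point of $-\Delta+q$ (no zero-energy resonance nor eigenvalue), then $I-R_0q$ is invertible, $(I-R_kq)^{-1}$ stays bounded, and $u_s=O(1)$, trivially within the asserted rate; the quoted rates are the worst case, realised when $\mathcal N=\ker(I-R_0q)$ (the zero-energy resonance/eigenspace) is nontrivial. I would handle this with a Grushin (Feshbach--Schur) reduction: split $W^{2,2}_{-\delta}$ along the finite-dimensional space $\mathcal N$ and its complement, on which $I-R_kq$ is boundedly invertible, so that the singularity is confined to a finite matrix over $\mathcal N$ whose entries inherit the $k$- and $\log k$-dependence from the resolvent expansion above. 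Inverting this matrix and recombining with the source estimate produces the bounds $\|u_s\|_{W^{2,2}_{-\delta}}=O(k^{-1})$ for $d\ge 3$ and $O((k\log k)^{-1})$ for $d=2$. The technical heart is organising the Jensen--Kato--Murata expansion so that the finite-rank singular parts decouple cleanly from the regular remainder, carrying out the dimension-by-dimension bookkeeping of the logarithms (in $d=2$ and in even $d\ge 4$, where the logarithm from the source and the logarithm from the inverse must be tracked together), and accounting for the fact that the incident plane wave $e^{ik\theta_0\cdot(\cdot)}$ is not square-integrable --- which is precisely why $u_s$ must be estimated through the specific source $R_k(q\,e^{ik\theta_0\cdot(\cdot)})$ rather than through the full perturbed resolvent $(-\Delta+q-k^2)^{-1}$, whose threshold singularity can be stronger.
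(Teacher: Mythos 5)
The paper offers no proof of this lemma: it is imported verbatim from \cite[Proposition 2.1]{Ru1}, which in turn packages the threshold (low-energy) resolvent expansions of Jensen \cite{J1,J2}, Jensen--Kato \cite{JK} and Murata \cite{M}. So your attempt must be measured against that analysis, whose overall architecture --- inverting $I-R_kq$ in \eqref{Lippmann} and feeding in low-energy expansions, with the zero-energy resonance/eigenvalue subspace treated separately --- your outline does reproduce correctly in spirit.

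However, there is a genuine gap: assembled as you describe, the argument does not give the stated rates. In the worst case you yourself isolate, $\mathcal{N}=\ker(I-R_0q)\neq\{0\}$, the kernel may contain a genuine zero-energy \emph{eigenfunction}, and this is possible under the lemma's hypotheses (for $d\ge 5$ take $\psi>0$ smooth, equal to $|x|^{2-d}$ outside a ball, and set $q=\Delta\psi/\psi$, which is bounded, real and compactly supported). In that case the Grushin matrix inverse, equivalently $(I-R_kq)^{-1}=I+(\Delta+k^2-q)^{-1}q$, has operator norm of size $k^{-2}$ on $W^{2,2}_{-\delta}$, and composing it with the $O(1)$ bound for the source $R_k(qe^{ik\theta_0\cdot(\cdot)})$ --- which is all your final step does --- yields only $\|u_s\|_{W^{2,2}_{-\delta}}=O(k^{-2})$, not $O(k^{-1})$. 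The stated rate is not an operator-norm fact: it encodes a cancellation between the $k^{-2}$ finite-rank singular part of the inverse and this \emph{particular} right-hand side (the zero modes satisfy $q\psi=\Delta\psi$, so their pairings against $qe^{ik\theta_0\cdot(\cdot)}$ and $R_k(qe^{ik\theta_0\cdot(\cdot)})$ carry extra positive powers of $k$, since $\widehat{q\psi}=\widehat{\Delta\psi}=-|\cdot|^2\widehat{\psi}$), and extracting this is precisely the Jensen--Kato--Murata bookkeeping that your plan defers as ``the technical heart'' without identifying the mechanism. Your stated reason for having to use the specific source --- that the plane wave is not square-integrable --- is not the operative one: $qe^{ik\theta_0\cdot(\cdot)}$ is a perfectly good element of $L^2_\delta$; what matters is how it pairs with the threshold subspace. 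Finally, in $d=2$ the claimed rate $(k\log k)^{-1}$ is \emph{better} than $k^{-1}$ by a factor $|\log k|^{-1}$, and no combination in your scheme of a logarithmically divergent source with a singular inverse produces such a gain; worse, $R_0$ does not exist in two dimensions, so the reduction around $I-R_0q$ cannot even be set up there and must be replaced by Murata's two-dimensional expansion, in which the singular coefficients themselves carry the $(\log k)^{-1}$.
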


\emph{Proof of Proposition \ref{resto}.}
Arguing as in the proof of Proposition \ref{proposicion_apendice}, we split the norm to control into two pieces $I$ and $II,$ and we will just bound $I,$ since $II$ can be bounded in a similar way.
In this case, making the change of variables given in \eqref{cambio}, we write
\begin{eqnarray}
I&=&\int_{H_{\theta_0}}\left\langle\xi\right\rangle^{2\alpha} \left|\widehat{q_m^r}(\xi)\right|^2 d \xi
\nonumber
\\
&\le&
C\int_0^{\frac{1}{2}} k^{d-1}\int_{S^{d-1}} \left| \widehat{q_m^r}(k(\theta-\theta_0)\right|^2  d\sigma (\theta) dk
\nonumber
\\
&&
+C \int_{\frac{1}{2}}^\infty k^{d-1+2\alpha}\int_{S^{d-1}} |\theta-\theta_0|^{2\alpha} \left| \widehat{q_m^r}(k(\theta-\theta_0)\right|^2  d\sigma (\theta) dk
\nonumber
\\
\label{laIqrm}
&=&C(I_1+I_2),
\end{eqnarray}
whenever $0\le\alpha\le 1.$

To estimate $I_1,$ from the definition of $q_m^r$ given in \eqref{resto_m}, using Cauchy-Schwarz inequality, since $q$ is compactly supported, we get
$$ 
\left| \widehat{q_m^r}(k(\theta-\theta_0)\right|  
\le
\left\|(q R_k)^m(qu_s)\right\|_{L^1}    
\le
C \left\|(q R_k)^m(qu_s)\right\|_{L^2_\delta  }.$$
We can bound the $L^2_\delta$-norm that appears in the last inequality using \eqref{zolesioI1} and \eqref{zapatero_1} as the following diagram illustrates
\begin{equation*}
\begin{CD}
W_{-\delta}^{2,2}
@>
{\texttt{\rm{Zolesio}}}
>>
L^2_\delta
\ldots
@>
{(m-1)-{\texttt{\rm{times}}}}
>>
L^2_\delta
@>
R_k
>>
W_{-\delta}^{2,2}
@>
{\texttt{\rm{Zolesio}}}
>>
L^2_\delta
.
\end{CD}
\end{equation*}
Therefore, if  $\delta>1$ and $\alpha$ satisfies \eqref{restriccionesI1a} or \eqref{restriccionesI1b} we get
$$
\left\|(q R_k)^m(qu_s)\right\|_{L^2_\delta  }\le C^{m}  \left\| q \right\|^{m+1}_{W^{\alpha ,2}}  \left\|
  u_s  \right\|_{W^{2,2}_{-\delta}},
  $$
thus,
$$I_1 
\le
C^{2m}  \left\| q \right\|^{2(m+1)}_{W^{\alpha ,2}}  \int_0^{\frac{1}{2}}k^{d-1} \left\|
  u_s(\cdot , \theta_0,k)  \right\|^2_{W^{2,2}_{-\delta}} dk. 
  $$
Since $q\in W^{\alpha ,2}(\mathbb{R}^d)$, Sobolev embedding theorem guarantees that $q\in L^2(\mathbb{R}^d) \cap L^r(\mathbb{R}^d)$ with $r\in\left(\frac{d}{2},\frac{2d}{d-2\alpha}\right]$
whenever $\alpha\ge 0$ and $\frac{d}{2}-2\le\alpha\le\frac{d}{2}$, so we can use estimates (\ref{murata_1}) and (\ref{murata_2}) in the previous inequality to obtain 
\begin{equation}\label{I1qrm}
I_1\le  C^{2m}  \left\| q \right\|^{2(m+1)}_{W^{\alpha ,2}}.
\end{equation}

On the other hand, denoting $w=|\theta-\theta_0|^\alpha,$ we can write
\begin{equation}\label{integral_4_qrm}
I_2 = \int_{\frac{1}{2}}^\infty k^{d-1+2 \alpha}\left\| w\,  \widehat{q^r_m}(k(\theta-\theta_0))\right\|^2_{L^2(S^{d-1})}dk.
\end{equation}
Arguing as in the proof of Proposition \ref{proposicion_apendice}, to control $I_2$ we write $q^r_m$ in terms of the operators $R_{k,\theta}$ and  $S_{k,\theta_0}$ introduced in \eqref{Rconjugado} and \eqref{def:restriction} respectively, 
\begin{equation*}
\widehat{q^r_m}(k(\theta-\theta_0))
=S_{k,\theta_0}\left((qR_{k,\theta_0})^m\left( e^{-ik\theta_0 \cdot (\cdot)}q u_s    \right)\right) (\theta).
\end{equation*}
From here, using \eqref{lema2}, \eqref{zolesioI2} and \eqref{lema1} as the following diagram illustrates
\begin{equation*}
\begin{CD}
W^{\alpha, t_1}_\delta
\ldots
@>
(m-1){\texttt{\rm{\tiny{times}}}}
>>
W^{\alpha, t_m}_{\delta}
@>
R_{k,\theta_0}
>>
W^{\alpha,r_m}_{-\delta}
@>
{\texttt{\rm{\tiny{Zolesio}}}}
>>
W^{\alpha,t_{m+1}}_{\delta} 
@>
S_{k,\theta_0}
>>
L^2(S^{d-1}),
\end{CD}
\end{equation*}
where $0\le\frac{1}{t_1}-\frac{1}{2}\le\frac{1}{d+1}$ and,
$r_{\ell}$ and $t_{\ell+1}$ satisfy \eqref{monedero_3} and \eqref{monedero_4} for $\ell=1,2\ldots m,$ 
we get
\begin{equation}
\label{normS}
\left\| w\,  \widehat{q^r_m}(k(\theta-\theta_0))\right\|^2_{L^2(S^{d-1})}
\le
C^{2m}
k^a
\left\| q\right\|^{2m}_{W^{\alpha, 2}}
\left\| e^{-ik\theta_0 \cdot (\cdot)}q u_s\right\|^2_{W^{\alpha, t_1}_\delta},
\end{equation}
with
\begin{equation*}
\label{a}
a=(d-1)\left(\frac{1}{t_{m+1}}-\frac{3}{2}\right)-2\alpha+\sum_{\ell=1}^{m}\left(-2+(d-1)\left(\frac{1}{t_{\ell}}-\frac{1}{r_{\ell}}\right)\right).
\end{equation*}
We can control the norm on the right hand side of \eqref{normS} multiplying \eqref{Lippmann} by $q(x)e^{-ik\theta_0 \cdot x}$, using the operator $R_{k,\theta_0}$, the triangular inequality and estimate \eqref{zolesioI2} to write
\begin{equation}
\left\| e^{-ik\theta_0 \cdot (\cdot)}q u_s\right\|_{W^{\alpha, t_1}_\delta}
\le
C
\|q\|_{W^{\alpha, 2}}
\left(\left\| R_{k,\theta_0}q\right\|_{W^{\alpha, r_0}_{-\delta}}
+
\left\|R_{k,\theta_0}( e^{-ik \theta_0 \cdot (\cdot)}q u_s)\right\|_{W^{\alpha, r_0}_{-\delta}}\right)
\label{derecha},
\end{equation}
whenever
\begin{equation*}\label{monedero_6}
\alpha\ge 0,\quad t_{1} < \min(2, r_0)\quad \text{   and   } \quad 0 \leq\frac{1}{2}+\frac{1}{r_0}-\frac{1}{t_{1}} \leq \frac{\alpha}{d}.
\end{equation*}  
Using \eqref{lema1} we get
$$
\left\|R_{k,\theta_0}( e^{-ik \theta_0 \cdot (\cdot)}q u_s)\right\|_{W^{\alpha, r_0}_{-\delta}}
\le
k^bC
\left\| e^{-ik \theta_0 \cdot (\cdot)}q u_s\right\|_{W^{\alpha, t_1}_\delta},
$$
with 
$$
b=-1+\frac{(d-1)}{2}\left(\frac{1}{t_1}-\frac{1}{r_0}\right),
$$
whenever
\begin{equation}\label{monedero_5}
\alpha\ge 0,\quad 0 \leq \frac{1}{t_{1}}- \frac{1}{2} \leq \frac{1}{d+1}\quad\text{  and  } \quad  0 \leq \frac{1}{2}- \frac{1}{r_0} \leq \frac{1}{d+1}, 
\end{equation}
From here, using \eqref{peque}, since $b<0,$ then
for $k\ge 1/2,$ we have that
\begin{eqnarray}
\label{absorcion}
\|q\|_{W^{\alpha, 2}}
\left\|R_{k,\theta_0}( e^{-ik \theta_0 \cdot (\cdot)}q u_s)\right\|_{W^{\alpha, r_0}_{-\delta}}
&\le&
\frac{A}{2^b}\,C
\left\| e^{-ik \theta_0 \cdot (\cdot)}q u_s\right\|_{W^{\alpha, t_1}_\delta}
\nonumber
\\
&<&
\frac{1}{2}\left\| e^{-ik \theta_0 \cdot (\cdot)}q u_s\right\|_{W^{\alpha, t_1}_\delta}
\end{eqnarray}
whenever $AC\le 2^{b-1}.$

Using \eqref{absorcion} in \eqref{derecha} we obtain
\begin{equation*}\label{derecha2}
\left\| e^{-ik\theta_0 \cdot (\cdot)}q u_s\right\|_{W^{\alpha, t_1}_\delta}
\le
2C
\|q\|_{W^{\alpha, 2}}
\left\| R_{k,\theta_0}q\right\|_{W^{\alpha, r_0}_{-\delta}}.
\end{equation*}
From here, since $0 \leq \frac{1}{2}- \frac{1}{r_0} \leq \frac{1}{d+1}$ (see\eqref{monedero_5}) we can use \eqref{lema1}, to get
\begin{equation*}\label{derecha3}
\left\| e^{-ik\theta_0 \cdot (\cdot)}q u_s\right\|_{W^{\alpha, t_1}_\delta}
\le C
k^{-1+\frac{(d-1)}{2}\left(\frac{1}{2}-\frac{1}{r_0}\right)}
\left\| q\right\|_{W^{\alpha, 2}}\left\| q\right\|_{W_{\delta}^{\alpha, 2}}.
\end{equation*}
And since $q$ has compact support, we can use \eqref{zolesioI2} to obtain
\begin{equation*}\label{derecha33}
\left\| e^{-ik\theta_0 \cdot (\cdot)}q u_s\right\|_{W^{\alpha, t_1}_\delta}
\le C
k^{-1+\frac{(d-1)}{2}\left(\frac{1}{2}-\frac{1}{r_0}\right)}
\left\| q\right\|^2_{W^{\alpha, 2}}.
\end{equation*}
Using this inequality and \eqref{normS} in \eqref{integral_4_qrm} we get
$$
I_2 
\le
C^{2m}\|q\|^{2(m+2)}_{W^{\alpha,2}}
\int_{\frac{1}{2}}^\infty k^{-2(m+1)-(d-1)\sum_{\ell=0}^{m}\left(\frac{1}{r_{\ell}}-\frac{1}{t_{\ell+1}}\right)}dk,
$$
for any $r_{\ell}$ and $t_{\ell+1}$ satisfy \eqref{monedero_3} and \eqref{monedero_4} for $\ell=0,1\ldots m.$ 

If we choose $r_{\ell}$ and $t_{\ell+1}$ satisfying \eqref{eleccion}, the last integral is convergent if and only if 
$$
2(m+1)+(d-1)(m+1)\left(\frac{\alpha}{d}-\frac{1}{2}\right)>1,
$$
or equivalently
\begin{equation*}
\label{sconm}
\alpha>\frac{d}{2}-\frac{(2m+1)d}{(m+1)(d-1)}.
\end{equation*}
Therefore, for any $m\in\mathbb{N}$, we have that 
\begin{equation}
\label{I2qrm}
I_2 
\le
C^{2m}\|q\|^{2(m+1)}_{W^{\alpha,2}}
\end{equation}
whenever 
\begin{equation*}
\label{sI2qrm}
0\le \alpha<d/2\qquad\text{and}\qquad \alpha>\frac{d}{2}-\frac{2d}{d-1}.
\end{equation*}
Finally, the result follows from \eqref{laIqrm},\eqref{I1qrm} and \eqref{I2qrm} for $\alpha$ satisfying \eqref{sqrm}.
\hfill
$\Box$
\section{Numerical experiments}

In this section we show two numerical experiments in dimension 2 that illustrate the efficiency of the proposed algorithm. We follow the discretization in \cite{BCR}, based on a trigonometric collocation method.  Note that the numerical version of the sequence (\ref{def:sequence}) only requires the numerical approximation of some integral equations involving powers of the resolvent $R_k$ (see the right hand side of (\ref{Q_operator})-(\ref{resto_m})) and the inverse Fourier transform. Both ingredients are described in \cite{BCR} and therefore the adaptation to the new algorithm defined here is straightforward. We refer to  \cite{BCR} for implementation details. 

We have considered two different examples corresponding to a piecewise constant potential (Example 1) and a smooth one (Example 2). Note that the piecewise constant potential in Example 1 does not satisfy the regularity condition in Theorem \ref{th:cauchy_sequence}, but nevertheless the algorithm provides good results. 
 
To compute the scattering data, i.e. the far field pattern, we have used a mesh twice finer than the mesh used to solve the inverse problem. In this way we try to simulate real data to recover the potential.

In the experiments below we have considered a computational domain $[-2.1,2.1]\times [-2.1,2.1]$ with $N$ grid points in each variable uniformly distributed. The potential is supported in the region $ [-1,1]\times [-1,1]$.

\subsection{Example 1}
In the first example the potential is a piecewise constant function, given by
\begin{equation}
q(x_1,x_2)=\left\{ 
\begin{array}{ll}
1.2, & \mbox{ if $|x_1|+|x_2|<0.3,$}\\
1, & \mbox{ if $0.7< |(x_1,x_2)|<1,$}\\
0, & otherwise.
\end{array}
\right.
\end{equation}
The error of the approximation is computed using the discrete $L^2$-norm of the difference between the numerical approximation of $q$ and the projection of the continuous functional in the mesh. 

In Figure \ref{fig:sol2} we show the behaviour of the error for the discrete approximations of $q_{m,\ell}$ in log-scale when considering a mesh with $32\times 32$ points.
Each line corresponds to $q_{m,\ell}$ for fixed $m$ and represents the error ($Y$-axis) with respect to $\ell$ ($X$-axis). Thus, the first line (in blue) illustrates the error of $q_{1,\ell}$. We see that it decreases as $\ell$ grows and it attains the lower error for $\ell=3$. 

The second line (in red) corresponds to $q_{2,\ell}$. It becomes stable for $\ell=4$. The other lines correspond to $q_{3,\ell}$ and $q_{4,\ell}$. We see that they exhibit the same behaviour as $q_{2,\ell}$. Thus, we deduce that for $m=2$ and $\ell=3$ we almost attain the minimal error.  
\begin{figure}
	\begin{center}
	\psfrag{-1}{$\hspace{-0.1cm}-1$}\psfrag{-0.9}{$-0.9$}\psfrag{-0.8}{$-0.8$}\psfrag{-0.7}{$-0.9$}\psfrag{-0.6}{$-0.6$}
		\psfrag{1}{$1$}\psfrag{2}{$2$}\psfrag{3}{$3$}\psfrag{4}{$4$}\psfrag{5}{$5$}\psfrag{6}{$6$}
		\psfrag{m=1}{$\hspace{-0.05cm}m\hspace{-0.1cm}=\hspace{-0.1cm}1$}
		\psfrag{m=2}{$\hspace{-0.05cm}m\hspace{-0.1cm}=\hspace{-0.1cm}2$}
		\psfrag{m=3}{$\hspace{-0.05cm}m\hspace{-0.1cm}=\hspace{-0.1cm}3$}
		\psfrag{m=4}{$\hspace{-0.05cm}m\hspace{-0.1cm}=\hspace{-0.1cm}4$}
		\psfrag{b}{\hspace{-1cm}$\log\|q_{m.\ell}-q\|_{L^2}$}
		\psfrag{a}{\hspace{-1cm}Parameter $\ell$}
		\includegraphics[height=7cm]{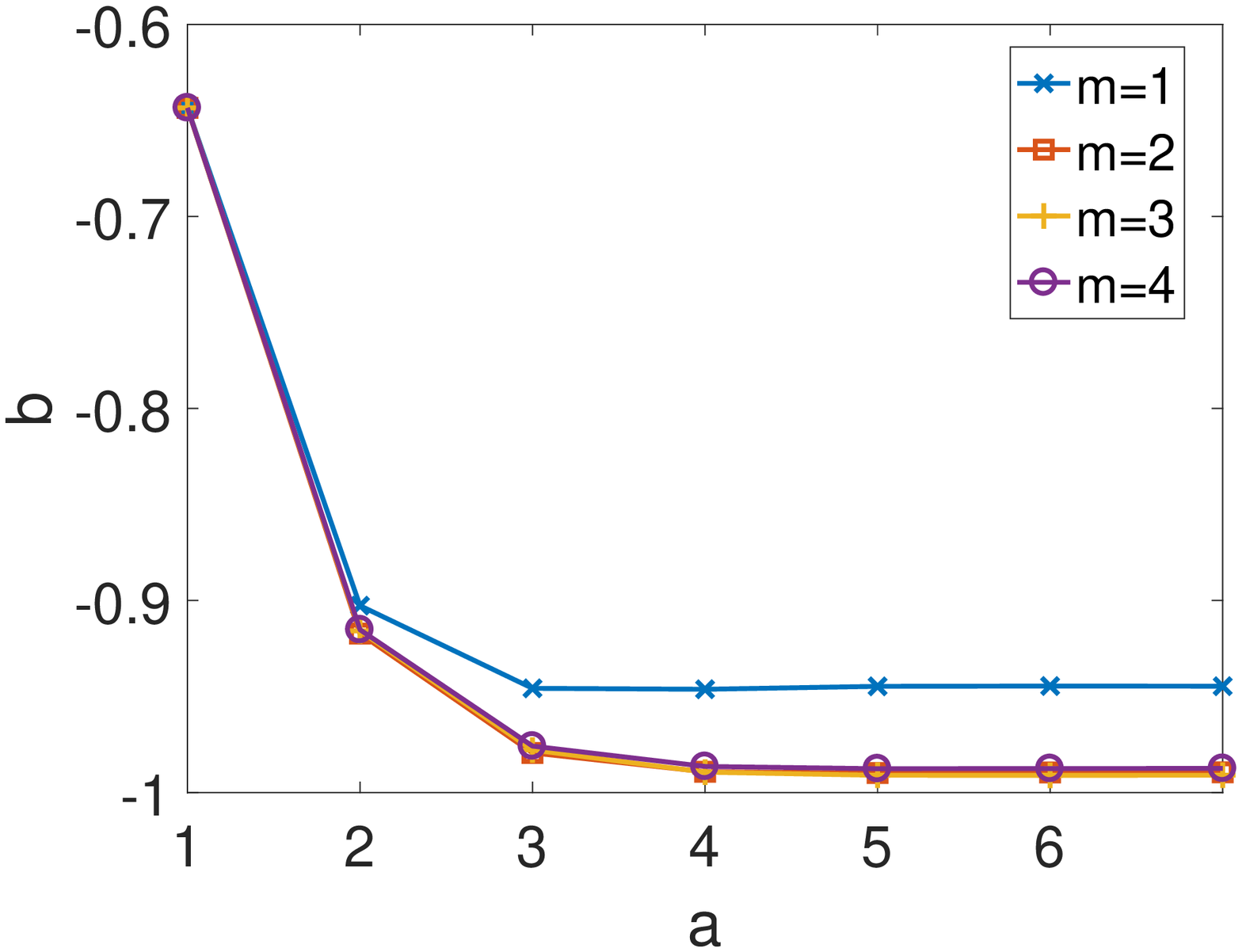}
		\caption{Error in log-scale for Example 1 with a mesh of $32\times32$ points.}
			\label{fig:sol2}
	\end{center}
\end{figure}
In Figure \ref{fig:sol3} we illustrate the same as in Figure \ref{fig:sol2} but this time with a mesh grid containing $128\times 128$ points. The behaviour is almost the same but the error becomes smaller. 
\begin{figure}
	\begin{center}
	\psfrag{-1.8}{$-1.8$}\psfrag{-1.6}{$-1.6$}\psfrag{-1.4}{$-1.4$}
		\psfrag{-1.2}{$-1.2$}\psfrag{-1}{$\hspace{-0.1cm}-1$}
		\psfrag{2}{$2$}\psfrag{3}{$3$}\psfrag{4}{$4$}\psfrag{5}{$5$}\psfrag{6}{$6$}\psfrag{7}{$7$}
		\psfrag{m=1}{$\hspace{-0.05cm}m\hspace{-0.1cm}=\hspace{-0.1cm}1$}
		\psfrag{m=2}{$\hspace{-0.05cm}m\hspace{-0.1cm}=\hspace{-0.1cm}2$}
		\psfrag{m=3}{$\hspace{-0.05cm}m\hspace{-0.1cm}=\hspace{-0.1cm}3$}
		\psfrag{m=4}{$\hspace{-0.05cm}m\hspace{-0.1cm}=\hspace{-0.1cm}4$}
		\psfrag{a}{\hspace{-1cm}$\log\|q_{m.\ell}-q\|_{L^2}$}
		\psfrag{b}{\hspace{-1cm}Parameter $\ell$}
		\includegraphics[height=7cm]{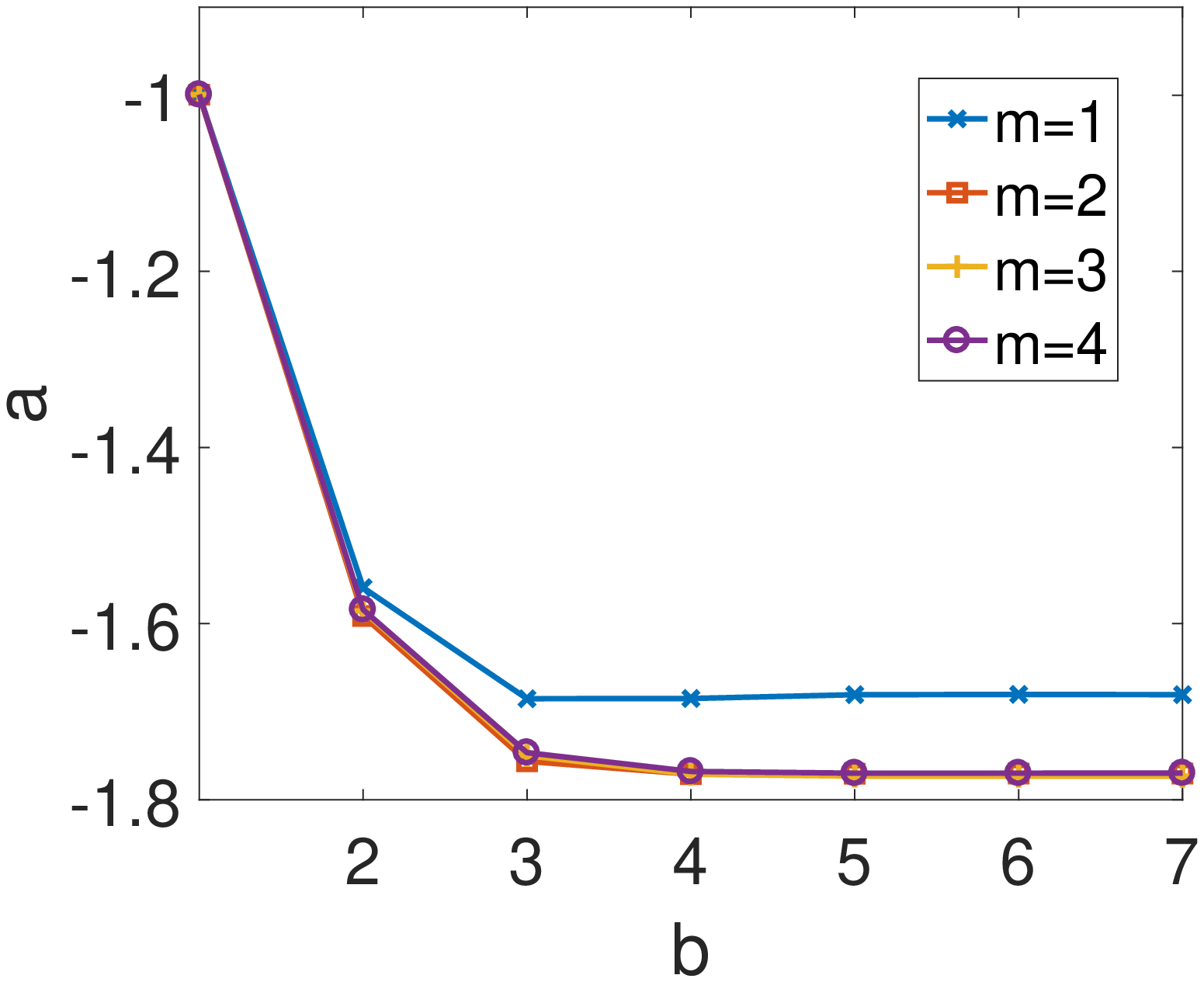}
		\caption{Error in log-scale for Example 1 with a mesh of $128\times128$ points.}
			\label{fig:sol3}
	\end{center}
\end{figure}

In Figure \ref{fig:sol3.1} we show a section at $x_2=0,$ of the graph of the potential given in Example 1 and its approximation $q_{4,7}$ considering a mesh with $N=128.$
\begin{figure}
	\begin{center}
	\psfrag{-2}{$-2$}\psfrag{-1}{$-1$}\psfrag{0}{$0$}
		\psfrag{2}{$2$}\psfrag{1}{$1$}
		\psfrag{0.5}{$0.5$}\psfrag{-0.5}{$-0.5$}
		\psfrag{1.5}{$1.5$}
		\includegraphics[height=7cm]{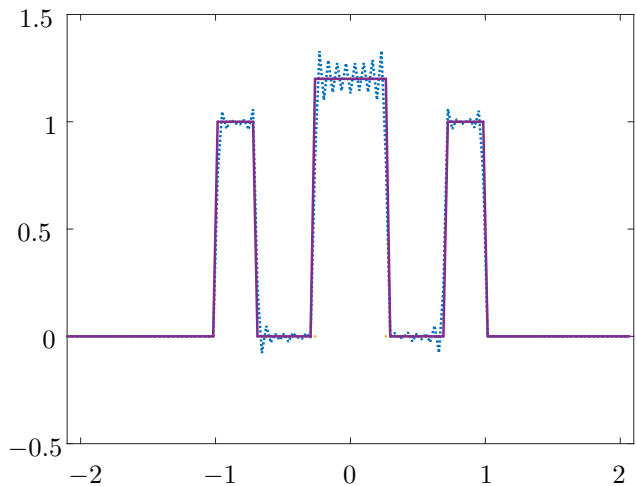}
		\caption{Section at $x_2=0$ of the graph of $q$ in Example 1 and its approximation $q_{4,7}$.}
			\label{fig:sol3.1}
	\end{center}
\end{figure}

Finally, in Figure \ref{fig:sol4} we illustrate the behaviour of the error as we consider finer meshes. To this end, we take $q_{4,7}$ as the best approximation for each mesh grid, since we have seen that larger values of $m$ and $\ell$ do not improve the error significatively. Then, we compare the error as the mesh becomes finer. This illustrates the convergence  of the approximations as $N$ goes to infinity. 
\begin{figure}
	\begin{center}
	\psfrag{-1.8}{$-1.8$}\psfrag{-1.6}{$-1.6$}\psfrag{-1.4}{$-1.4$}
		\psfrag{-1.2}{$-1.2$}\psfrag{-1}{\hspace{-0.1cm}$-1$}\psfrag{-0.8}{$-0.8$}
		\psfrag{32}{$32$}\psfrag{64}{$64$}\psfrag{128}{$128$}
		\psfrag{N}{\hspace{-1cm}Parameter $N$}
		\psfrag{b}{\hspace{-1cm}$\log\|q_{4,7}-q\|_{L^2}$}
		\psfrag{a}{\hspace{-1cm}Parameter $N$}
		\includegraphics[height=7cm]{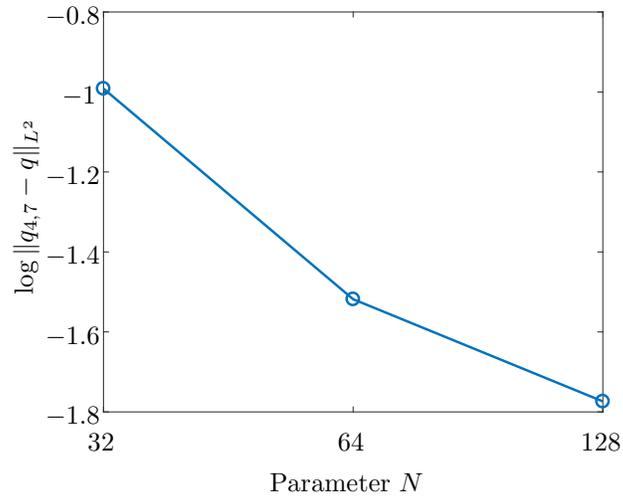}
		\caption{Error of the approximation $q_{4,7}$ in Example 1 for different meshes.}
			\label{fig:sol4}
	\end{center}
\end{figure}

\subsection{Example 2}
Here we consider a smooth potential given by 
\begin{eqnarray*}
	q(x_1,x_2)&=&\max(0,e^{-5|x_1-0.5|^2})+1.5e^{-4|(x_1,x_2)-(-1,0.8)/2|^2}\\&&+2e^{-7|(x_1,x_2)-0.4(-1,-1)|^2-0.4}.
\end{eqnarray*}

The numerical results for this example are completely similar to those of the previous one, but the errors are significatively smaller. This is due to the regularity of the potential $q$ that we recover. We illustrate this in Figure \ref{fig:sol5}, which is the analogous to Figure \ref{fig:sol4}. Note that in this case we obtain errors of $10^{-4.4}$ instead of $10^{-1.8}$ of the previous example.
\begin{figure}
	\begin{center}
	\psfrag{-4.4}{$-4.4$}\psfrag{-4.3}{$-4.3$}\psfrag{-4.2}{$-4.2$}
		\psfrag{-4.1}{$-4.1$}\psfrag{-4}{\hspace{-0.1cm}$-4$}\psfrag{-3.9}{$-3.9$}\psfrag{-3.8}{$-3.8$}
		\psfrag{32}{$32$}\psfrag{64}{$64$}\psfrag{128}{$128$}
			\psfrag{b}{\hspace{-1cm}$\log\|q_{4,7}-q\|_{L^2}$}
		\psfrag{a}{\hspace{-1cm}Parameter $N$}
		\includegraphics[height=7cm]{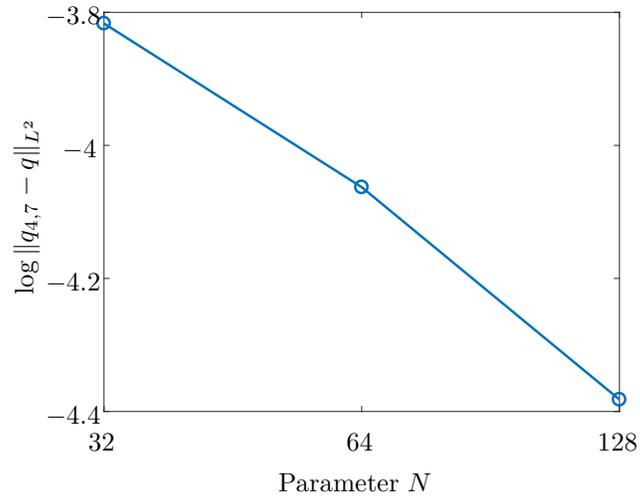}
		\caption{Error of the approximation $q_{4,7}$ in Example 2 for different meshes.}
			\label{fig:sol5}
	\end{center}
\end{figure}

On the other hand, the norm of the potential is not particularly small in this example, which shows that the smallness hypothesis in Theorem \ref{th:cauchy_sequence} can be probably relaxed in practice.   

Finally, it is worth mentioning that the numerical approximations obtained with the algorithm introduced here are similar to those obtained by the iterative one in \cite{BCR}, at least in the experiments that we have considered. The main point is that the computational cost is reduced drastically for large $N$ since, as we said in the introduction, we do not have to solve a Lipmann-Schwinger equation for each point in the mesh, and for each iteration. 
\section*{Acknowledgements}
The first and the second author were supported by Spanish Grant MTM2014-57769-C3-2-P,
	the third by Spanish Grant MTM2014-53850-P2, and
	the fourth by Spanish Grant
	MTM2014-57769-C3-1-P.

The authors would like to gratefully thank Alberto Ruiz for his invaluable advice. 
We also thank Samuli Siltanen and Juan Manuel Reyes.
\section*{References}


\end{document}